\documentclass [11pt,a4paper]{amsart}

\usepackage{amssymb} 
\usepackage{amsfonts} 
\usepackage{amsmath}
\usepackage{amsthm} 
\usepackage{tikz}
\usepackage[utf8]{inputenc}
\usepackage[all]{xy}
\SelectTips{cm}{}
\usepackage{pifont}
\usepackage{enumitem}
\setlist{leftmargin=2\parindent}
\setlist[2]{leftmargin=1.5\parindent}
\setlist[3]{leftmargin=1\parindent}
\setlist[enumerate]{font=\normalfont}

\usepackage[pagebackref,%colorlinks,linkcolor=black,citecolor=black%
    ,pdfborder={0 0 0}
    ,urlcolor=black,a4paper,hypertexnames=false]{hyperref}
\hypersetup{pdfauthor={Stefan Friedl, Clara L\"oh},pdftitle={Epimorphism testing with virtually Abelian tragets}}

\usepackage{caption}

\newtheorem{lem}{Lemma}[section]
\newtheorem{thm}[lem]{Theorem}
\newtheorem{prop}[lem]{Proposition}
\newtheorem{cor}[lem]{Corollary} 

\theoremstyle{definition}
\newtheorem{defi}[lem]{Definition}
\newtheorem{setup}[lem]{Setup}
\newtheorem{conj}[lem]{Conjecture}
\newtheorem{question}[lem]{Question}

\newtheorem{exa}[lem]{Example}
\newtheorem{rem}[lem]{Remark} 

\DeclareMathOperator{\ab}{ab}
\DeclareMathOperator{\Span}{Span}
\DeclareMathOperator{\im}{im}
\DeclareMathOperator{\Hom}{Hom}

\DeclareMathOperator{\Aut}{Aut}
\DeclareMathOperator{\map}{map}
\DeclareMathOperator{\id}{id}
\DeclareMathOperator{\Free}{Free}

\newcommand{\N}{\ensuremath {\mathbb{N}}}

\newcommand{\Z} {\ensuremath {\mathbb{Z}}}

\newcommand{\EE}{\ensuremath{\mathbb{E}}}
\newcommand{\LL}{\ensuremath{\mathbb{L}}}
\newcommand{\KK}{\ensuremath{\mathbb{K}}}

\newcommand{\Lab}{\LL_{\ab}}
\newcommand{\Eab}{\EE_{\ab}}
\newcommand{\Kab}{\KK_{\ab}}

\DeclareMathOperator{\Chn}{C}

% better parentheses (for simvol etc) ...   

 \makeatletter
 \newcommand\norm{\bBigg@{0.8}}

 \makeatother
 \newcommand{\inparens}[2][flex]{\csname #1l\endcsname(#2%
                                 \csname #1r\endcsname)\mathclose{}}
 \newcommand{\inangles}[2][flex]{\csname #1l\endcsname\langle#2%
                                 \csname #1r\endcsname\rangle\mathclose{}} 
 \newcommand{\innorm}[2][flex]{\csname #1l\endcsname|#2%
                                 \csname #1r\endcsname|\mathclose{}}
 \newcommand{\genrel}[3][norm]{\csname #1l\endcsname\langle % 
                              #2 %
                              \,\csname#1m\endcsname|\, % 
                              #3 %
                              \csname#1r\endcsname\rangle\mathclose{}}
\def\args{\cdot}

\def\fa#1{%
  \forall_{#1}\;\;\;}

\title[Epimorphism testing with virtually Abelian targets]
      {Epimorphism testing\\ with virtually Abelian targets}

\author{Stefan Friedl}      
\author{Clara L\"oh}

\subjclass[2010]{20E18, 20F65}
%20E18 profinite groups and limits
%20F65 geometric group theory

\keywords{residual properties of groups, algorithms on groups}

\date{\today.\ \copyright{\ S.~Friedl, C.~L\"oh 2020}. 
    This work was supported by the CRC~1085 \emph{Higher Invariants} 
    (Universit\"at Regensburg, funded by the DFG)}%\draftinfo}

%%%%%%%%%%%%%%%%%%%%%%%%%%%%%%%%%%%%%%%%%%%%%%%%%%%%%%%%%%%%
\begin{document}

\begin{abstract}
  We show that the epimorphism problem is solvable for targets that
  are virtually cyclic or a product of an Abelian group and a finite
  group.
\end{abstract}
\maketitle

%%%%%%%%%%%%%%%%%%%%%%%%%%%%%%%%%%%%%%%%%%%%%%%%%%%%%%%%%%%%%%%%%%
\section{Introduction}

Given two finitely presented groups~$\Gamma$ and $\Lambda$, it is
natural to wonder if one can determine algorithmically whether there
exists a group epimorphism~$\Gamma \longrightarrow \Lambda$ or
not. For example, the existence of a group epimorphism~$\pi_1(M)
\longrightarrow \pi_1(N)$ between the fundamental groups of oriented
closed connected manifolds~$M$ and $N$ is a first, rudimentary,
necessary condition for the existence of a continuous map~$M
\longrightarrow N$ of degree $\pm 1$, see~\cite[p.~178]{loeh}.

If the domain group $\Gamma$ is trivial, then the epimorphism problem is
equivalent to deciding whether the target~$\Lambda$ is trivial or
not. However, it is well known that the triviality problem is
undecidable~\cite{miller}. Therefore, in general, the epimorphism
problem is undecidable.

Thus it is reasonable to restrict oneself to suitable classes of
groups. In this paper we want to study the following uniform version
of the epimorphism problem.

\begin{question}[uniform epimorphism problem]\label{q:uepi}
  Let $C$ and $D$ be two classes of finitely presented groups.  Does
  there exist an algorithm that solves the \emph{uniform epimorphism
    problem from $C$ onto $D$}\;?  More precisely, does there exist an
  algorithm that takes as an input a finite presentation~$\genrel SR$
  of a group in $C$ and a finite presentation $\genrel {S'}{R'}$ of a
  group in $D$ and that determines whether there exists an
  epimorphism~$\bigl| \genrel SR \bigr| \longrightarrow \bigl| \genrel
  {S'}{R'} \bigr|$ or not. When $C$ is the class of all finitely
  presented groups, then we refer to the above as the \emph{uniform
    epimorphism problem onto $D$}.
\end{question}

\begin{exa}
  \hfil
  \begin{itemize}
  \item As discussed above, the uniform epimorphism problem is
    undecidable whenever $C$ contains the trivial group and $D$ equals
    the class of all finitely presented groups.
  \item If $C=D$ is contained in the class of Hopfian groups, then the
    epimorphism problem is equivalent to the isomorphism problem for
    $C=D$.
  \item Let $C=D$ be the class of all finitely presented nilpotent
    groups. Remeslennikov showed that the uniform epimorphism problem
    from $C$ to $D$ is \emph{not} decidable~\cite{remeslennikov}. In
    particular the uniform nilpotency problem onto $C$ is undecidable.
    The proof by Remeslennikov is based on a reduction to the
    unsolvability of Hilbert's tenth problem.
\end{itemize}
\end{exa}

In contrast note that if $D$ is the class of finite groups or the
class of Abelian groups, then the uniform epimorphism problem onto $D$
is actually solvable. Indeed, for finite targets, we can compute the
whole set of epimorphisms (Proposition~\ref{prop:finepilist}), for
Abelian targets, we can explicitly compute the abelianisation of the
domain group and apply the structure theory of finitely generated
Abelian groups.

It is natural to ask whether the uniform epimorphism problem is
solvable for classes of groups that are ``close'' to being Abelian or
finite. This leads us to the following question.

\begin{question}\label{q:epi-onto-virtually-abelian}
Is the uniform epimorphism problem onto the class of virtually Abelian groups solvable?
\end{question}

Our main result gives a partial answer to
Question~\ref{q:epi-onto-virtually-abelian}.

\begin{thm}\label{mainthm}
  \hfil
  \begin{itemize}
  \item The uniform epimorphism problem onto the class of groups that
    are a direct product of a finitely generated Abelian group with a
    finite group is solvable $($see Theorem~\ref{thm:prod} for the
    precise formulation$)$.
  \item
    The uniform epimorphism problem onto the class of groups that are
    virtually cyclic is solvable $($see Theorem~\ref{thm:vZ} for the
    precise formulation$)$.
\end{itemize}
\end{thm}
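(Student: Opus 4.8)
The plan is to treat the two bullet points separately; in each, the existence of an epimorphism is reduced to a \emph{finite} search together with a decidable subproblem in linear algebra over~$\Z$. Two such subproblems occur: \emph{(i)} whether a coset of a subgroup of $\Hom(G,A)$, for finitely generated abelian groups $G$ and $A$, contains an epimorphism onto~$A$; and \emph{(ii)} whether an affine sublattice $L\subseteq\Z^k$ (the solution set of finitely many $\Z$-linear equations) contains a point at which a prescribed finite family of integer-valued affine-linear functions on $\Z^k$ has greatest common divisor~$1$. Both are decidable: one reduces modulo the finitely many relevant primes, where the question becomes a finite check, adding a rank condition over~$\Q$ to account for free parts; I will treat this as a routine lemma.

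\emph{Products of an abelian and a finite group.} Write the target as $A\times F$ with $A$ finitely generated abelian and $F$ finite. A homomorphism $\Gamma\to A\times F$ is a pair $(\phi,\psi)$ with $\phi\colon\Gamma\to A$ and $\psi\colon\Gamma\to F$, and one checks directly that it is surjective if and only if $\psi$ is surjective and $\phi(\ker\psi)=A$. By Proposition~\ref{prop:finepilist} one lists the finitely many $\psi\in\Hom(\Gamma,F)$ and keeps the surjective ones. For such a $\psi$ the subgroup $\Gamma_0:=\ker\psi$ has finite index, hence is finitely presented by Reidemeister--Schreier, and since $A$ is abelian the existence of $\phi$ with $\phi(\Gamma_0)=A$ depends only on $\Gamma^{\ab}$ and on the finite-index subgroup $B:=\im(\Gamma_0\to\Gamma^{\ab})$. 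It then remains to decide, for finitely generated abelian $B\le G$ of finite index and finitely generated abelian $A$, whether some $\bar\phi\in\Hom(G,A)$ has $\bar\phi(B)=A$. Writing $e$ for the exponent of $G/B$, one shows that this holds if and only if there is $\bar\phi$ that is onto $A$ and already onto $A/eA$ when restricted to $B$ --- because $A/\bar\phi(B)$ is then a quotient of $G/B$ that is also $e$-divisible, hence trivial --- and this last condition is a finite number of instances of subproblem~\emph{(i)}.

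\emph{Virtually cyclic groups.} First one decides whether the target $\Lambda$ is finite; this is possible because $\Lambda$ is promised to be virtually cyclic, e.g.\ by running a coset enumeration testing finiteness against a search for a finite-index subgroup with infinite abelianisation --- exactly one of the two halts --- and in the finite case one invokes Proposition~\ref{prop:finepilist}. If $\Lambda$ is infinite then it is $\Z$-by-finite, and using the (algorithmically accessible) structure of a virtually cyclic group one computes an extension $1\to N\to\Lambda\to E\to1$ with $N\cong\Z$ and $E$ finite, together with a transversal $(\lambda_e)_{e\in E}$, the sign character $\varepsilon\colon E\to\Aut(N)=\{\pm1\}$ describing the conjugation action on $N$, and the factor set. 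An epimorphism $\phi\colon\Gamma\to\Lambda$ yields a normal subgroup $\Gamma':=\phi^{-1}(N)$ with $\Gamma/\Gamma'\cong E$ together with a surjection $\phi|_{\Gamma'}\colon\Gamma'\to N\cong\Z$; conversely $\phi$ is determined by such data. So one enumerates the finitely many normal $\Gamma'\trianglelefteq\Gamma$ with $\Gamma/\Gamma'\cong E$ (equivalently, the surjections $\Gamma\to E$ and their kernels), and for each, and for each identification $\mu\colon\Gamma/\Gamma'\to E$, one parametrises the homomorphisms $\phi\colon\Gamma=\genrel SR\to\Lambda$ inducing $\mu$: necessarily $\phi(s)=\lambda_{\mu(\bar s)}\,n_s$ with unknowns $n_s\in N\cong\Z$; the composite $\Gamma\to\Lambda\to E$ is then automatically the prescribed homomorphism, so every relator maps into $N$, and each condition $\phi(r)=1$ becomes a \emph{linear} equation over $\Z$ in the $n_s$. (This linearity is exactly what the hypothesis ``$\Lambda$ is virtually $\Z$'' buys us: the non-abelian part of $\Lambda$ has been absorbed into the finite quotient $E$.) The solutions form an affine sublattice of $\Z^S$; on it, the $\phi$-images of a finite generating set of $\Gamma'$ --- computed by Reidemeister--Schreier --- are integer-valued affine-linear functions of the $n_s$, and $\phi$ is surjective precisely when these functions have greatest common divisor~$1$ at the chosen point. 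Whether such a point exists is subproblem~\emph{(ii)}, and one answers ``yes'' to the epimorphism problem if and only if some choice of $(\Gamma',\mu)$ succeeds.

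The main obstacle, as I see it, is in the virtually cyclic case: first, making the search finite --- the crucial idea being to pull back a finite-index $\Z\le\Lambda$, which replaces the infinitely many surjections $\Gamma\to\Z$ by the finitely many choices of $\Gamma'$; and second, checking that imposing the relators and the surjectivity requirement genuinely reduces to a decidable problem over $\Z$ rather than to something non-commutative. Accessing the structure of the virtually cyclic target $\Lambda$ effectively (which finite normal subgroup, which extension, which action) is a further non-trivial but standard ingredient. Everything else --- the reductions in the product case and the ``coset contains an epimorphism'' lemma for finitely generated abelian groups --- is comparatively routine.
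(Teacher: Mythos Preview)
Your treatment of the virtually cyclic case is essentially the paper's argument: enumerate the epimorphisms to the finite quotient~$E$, parametrise lifts via a transversal so that the relators become $\Z$-linear constraints (this is Proposition~\ref{prop:Kc} and Remark~\ref{rem:computeK}), and then decide whether the resulting affine lattice contains a point at which the images of a generating set of~$\Gamma'$ have $\gcd$ equal to~$1$ --- which is exactly the one-dimensional column-generation problem solved in Proposition~\ref{prop:cgvZ}. Your subproblem~(ii) and its decidability match this.

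The gap is in your subproblem~(i). As you state it --- does a given coset of a subgroup of $\Hom(G,A)$ contain an epimorphism onto~$A$, for arbitrary finitely generated abelian~$G,A$ --- this \emph{is} the general column-generation problem of Section~\ref{sec:linalg} (take $G=\Z^N$, $A=\Z^d$), which the paper records as undecidable, citing Remeslennikov; it is precisely this obstruction that motivates the paper's Conjecture that the virtually abelian case is undecidable. Your sketch ``reduce modulo the finitely many relevant primes, plus a rank condition over~$\Q$'' cannot work in this generality: when the subgroup has positive corank the surjectivity condition on the coset is governed by the $d\times d$ minors, a genuinely Diophantine constraint with no finite set of ``relevant primes''. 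So treating~(i) as a routine lemma is not justified.

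For the product case you do not need the general~(i). The actual question is: given a finite-index inclusion~$B\le G$ of finitely generated abelian groups, does some $\bar\phi\in\Hom(G,\Z^d)$ satisfy $\bar\phi(B)=\Z^d$? The paper handles this directly in Proposition~\ref{prop:cgprod}: pass to the free quotients, put $B\hookrightarrow G$ in Smith normal form with invariants $\alpha_1\mid\dots\mid\alpha_n$, and the answer is \emph{yes} if and only if at least~$d$ of the~$\alpha_i$ are units. Your ``$e$-divisible'' equivalence is correct but unnecessary, and the cosets it produces --- of the very special subgroup $\{\bar\phi:\bar\phi(B)\subset eA\}$ --- are not arbitrary; those particular instances are indeed decidable, but your blanket appeal to~(i) does not establish this. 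Replacing the detour through~(i) by the Smith-normal-form count closes the gap.
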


As the epimorphism problem for virtually Abelian targets leads to a
similar problem in linear algebra (Section~\ref{subsec:translationLA})
as in the above case of nilpotent targets studied by
Remeslennikov~\cite{remeslennikov}, the following is plausible:

\begin{conj}
  The uniform epimorphism problem onto the class of all finitely generated
  virtually Abelian groups is \emph{not} decidable.
\end{conj}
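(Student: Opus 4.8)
The plan is to establish undecidability by an effective reduction from Hilbert's tenth problem --- the unsolvability, by Matiyasevich building on Davis, Putnam, and Robinson, of Diophantine equations over $\Z$ --- exactly as Remeslennikov reduces that problem to the nilpotent case~\cite{remeslennikov}. Concretely, to a Diophantine system $\mathcal S$ in variables $x_1,\dots,x_k$ one wants to associate, by a single algorithm, a finite presentation of a group $\Gamma_{\mathcal S}$ (necessarily not virtually Abelian) and a finite presentation of a finitely generated virtually Abelian group $\Lambda_{\mathcal S}$ such that there is an epimorphism $\Gamma_{\mathcal S}\longrightarrow\Lambda_{\mathcal S}$ if and only if $\mathcal S$ has a solution over~$\Z$. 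Since the former problem is undecidable and the reduction is recursive, this yields the conjecture; by pre-processing $\mathcal S$ one may moreover assume that every equation of $\mathcal S$ is linear or of the form $x_\ell=x_ix_j$, so that only one nonlinear phenomenon has to be realised.

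The equivalence should be obtained through the translation into linear algebra of Section~\ref{subsec:translationLA}. For a virtually Abelian group presented by an extension $1\to\Z^n\to\Lambda_{\mathcal S}\to Q\to 1$ with $Q$ finite, acting on $\Z^n$, and a chosen cohomology class, a homomorphism $\Gamma_{\mathcal S}\to\Lambda_{\mathcal S}$ is the same as a homomorphism $\bar\varphi\colon\Gamma_{\mathcal S}\to Q$ --- of which there are only finitely many, all computable (cf.\ Proposition~\ref{prop:finepilist}) --- together with a compatible lift of the generators, and the existence of such a lift, as well as its surjectivity, is governed by a system of equations over $\Z$ in the $\Z^n$-coordinates of the lifts. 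The idea is to choose $Q$, the $Q$-action on $\Z^n$, the extension class and the relators of $\Gamma_{\mathcal S}$ so that, for the unique relevant $\bar\varphi$, this system is precisely the above reformulation of $\mathcal S$ together with a surjectivity constraint that is automatically satisfied once $\mathcal S$ is; the $\Z[Q]$-module structure on $\Z^n$ --- for instance with $Q$ a finite product of cyclic groups, so that $\Z[Q]$ is a quotient of a polynomial ring --- is the candidate mechanism for building the one required ``multiplication cell'' $x_\ell=x_ix_j$.

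The hard part --- and the reason this is only a conjecture --- is exactly the construction of that multiplication cell. Remeslennikov exploits that in a free nilpotent group of class $2$ the commutator is $\Z$-bilinear in the exponents, $[x^a,y^b]=[x,y]^{ab}$; virtually Abelian groups offer nothing comparable, since conjugation twists an element of $\Z^n$ by a matrix of \emph{finite} order, so the dependence on an exponent is periodic rather than linear and cannot on its own encode an unbounded product. A successful proof would therefore need a genuinely different encoding --- spreading the Diophantine variables over many $\Z$-summands and forcing the multiplicative relations through the joint effect of the action and the extension class, or through the interplay between surjectivity onto $\Z^n$ and $\Z[Q]$-linearity --- and it is entirely possible that no such encoding exists, in which case Question~\ref{q:epi-onto-virtually-abelian} would have a positive answer instead. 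Two further technical points to be handled carefully are making the ``epimorphism'' condition, rather than the mere existence of some homomorphism, correspond exactly to solvability, and checking that the entire assignment $\mathcal S\mapsto(\Gamma_{\mathcal S},\Lambda_{\mathcal S})$ is uniform, i.e.\ computed by one algorithm.
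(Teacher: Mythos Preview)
The statement is a \emph{conjecture}: the paper offers no proof, only the one-sentence motivation that the linear-algebra translation of Section~\ref{subsec:translationLA} resembles the setup in Remeslennikov's undecidability result for nilpotent targets. There is therefore no proof in the paper to compare your proposal against.

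Your write-up is not a proof either, and to your credit you say so explicitly. What you give is a coherent outline of how a Remeslennikov-style reduction from Hilbert's tenth problem \emph{would} have to go, together with an honest identification of the obstruction: in a virtually Abelian group the conjugation action on~$\Z^n$ is by matrices of finite order, so the resulting dependence of the twisted equations on exponents is periodic and does not obviously encode an unbounded product~$x_\ell = x_i x_j$. This diagnosis is correct and is precisely why the paper leaves the statement as a conjecture rather than a theorem. Your discussion in fact goes well beyond the paper's single motivating sentence.

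That said, as a ``proof proposal'' it has a genuine gap by your own account: the construction of the multiplication cell is missing, and without it no reduction exists. The remaining technical points you flag --- arranging that \emph{surjectivity} (not mere existence of a homomorphism) matches solvability, and uniformity of the assignment $\mathcal S \mapsto (\Gamma_{\mathcal S},\Lambda_{\mathcal S})$ --- are real but secondary; they would be routine once the core encoding is in hand. If you intend this text to accompany the conjecture, it should be framed as a heuristic discussion or remark, not as a proof.
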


We also have two questions on the two opposite rays of Bridson's
universe of groups.

\begin{question}
  \hfil
  \begin{itemize}
  \item 
    Let $D$ be the class of groups that are isomorphic to subgroups of
    products of a finitely generated Abelian group and a finite
    group. Is the uniform epimorphism problem onto $D$ solvable?
  \item Let $H$ be the class of hyperbolic groups. Is the uniform
    epimorphism problem from $H$ onto $H$ solvable?
\end{itemize}
\end{question}

We conclude this introduction with a discussion on the relevance of
the epimorphism problem for the isomorphism problem. To do so we
introduce the following notation: given a group $G$ and a class of
groups $C$ we define $\operatorname{Epi}(G,C)$ to be the class of
quotients of $G$ that lie in $C$.  If two groups $G$ and $H$ are
isomorphic, then for any $C$ we evidently have
$\operatorname{Epi}(G,C)=\operatorname{Epi}(H,C)$.

The set $C=\operatorname{Fin}$ of all isomorphism types of finite
groups is well-studied.  In fact, we know that for two finitely
generated groups $G$ and $H$ we have
$\operatorname{Epi}(G,\operatorname{Fin})=\operatorname{Epi}(G,\operatorname{Fin})$
if and only if the profinite completions of the two groups are
isomorphic~\cite[Corollary~3.2.8]{ribes-zalesskii}.

In practice studying $\operatorname{Epi}(-,\operatorname{Fin})$ can be
a very effective way for showing that two groups are not
isomorphic. For example this approach gets used in the tabulation of
knots.

Nonetheless there are even very basic examples where this approach
fails.  For example Baumslag \cite{baumslag} gave examples of pairs of
non-isomorphic groups $G$ and $H$ which are both virtually-$\Z$, in fact they are
both finite cyclic-by-$\Z$, but with
$\operatorname{Epi}(G,\operatorname{Fin})=\operatorname{Epi}(H,\operatorname{Fin})$.
This shows that it is useful to have larger classes of groups with
which we can probe $G$ and $H$.  Let $\operatorname{Virt-}\Z$ be the
class of groups that are virtually $\Z$. It is not difficult to give
an algorithm that lists all isomorphism types
in~$\operatorname{Virt-}\Z$.  This observation together with standard
algorithms and Theorem~\ref{mainthm}~(2) implies that there exists an
algorithm that can determine, given two finitely presented groups $G$
and $H$, whether or not
$\operatorname{Epi}(G,\operatorname{Virt-}\Z)=\operatorname{Epi}(H,\operatorname{Virt-}\Z)$.

%%%%%%%%%%%%%%%%%%%%%%%%%%%%
\subsection*{A note on algorithms}

In this article, we describe algorithms in natural language. On the
one hand, these pseudo-algorithms lack some concreteness. On the other
hand, these descriptions have the advantage that we do not impose a
programming paradigm (such as declarative, imperative, functional,
\dots) and that we do not clutter the algorithmic ideas with
irrelevant technical details.

Moreover, as all of the algorithms that we consider will have
ridiculous worst-case complexity anyway, we do not pay any attention
on efficiency.

%%%%%%%%%%%%%%%%%%%%%%%%%%%%
\subsection*{Organisation of this article}

After a short explanation of basic notation
(Section~\ref{sec:prelim}), we begin with characterisations of
existence of epimorphisms and translations of these characterisations
into linear algebra (Section~\ref{sec:prep}). In
Section~\ref{sec:linalg}, we explain how to solve the linear algebraic
problems in specific cases.  Using these methods, we solve the
epimorphism problem for targets that are products of Abelian and
finite groups as well as for virtually cyclic targets
(Theorem~\ref{thm:prod}, Theorem~\ref{thm:vZ}).

%%%%%%%%%%%%%%%%%%%%%%%%%%%%
\subsection*{Acknowledgments.}
We wish to thank
  Nicolaus Heuer, Christoforos Neofytidis,
  Jos\'e Pedro Quintanilha
and Alan Reid for helpful comments and conversations.
We would like to thank the anonymous referee for spotting a mistake
in one of the cases of the proof of Proposition~\ref{prop:cgvZ}.

%%%%%%%%%%%%%%%%%%%%%%%%%%%%%%%%%%%%%%%%%%%%%%%%%%%%%%%%%%%%%%%%%
\section{Preliminaries}\label{sec:prelim}

Here, we fix basic notation and terminology.

%%%%%%%%%%%%%%%%%%%
\subsection{Generators and relations}

A \emph{group presentation} is a pair~$\genrel SR$ consisting of a
set~$S$ and a subset~$R$ of the free group~$\Free(S)$, freely
generated by~$S$. We will usually view~$\Free(S)$ as the set of
reduced words in~$S \cup S^{-1}$. A group presentation~$\genrel SR$ is
finite if both $S$ and $R$ are finite. If $\genrel SR$ is a group
presentation, then we denote the group described by this presentation
by
\[ \bigl| \genrel SR\bigr| := \Free(S) / \langle R \rangle^{\triangleleft}_{\Free(S)}.
\]

\begin{defi}[symmetric presentation]
  A group presentation~$\genrel SR$ is \emph{symmetric} if the
  following hold:
  \begin{itemize}
  \item Each relation in~$R$ is a positive word in~$S$.
  \item For each~$s \in S$ there exists an~$s' \in S$ with~$ss' \in R$
    or $s's \in R$.
  \end{itemize}
  Here, a word is \emph{positive} with respect to a generating set if
  it only consists of positive powers of generators. 
  It should be noted that this notion of symmetry of a presentation
  is different from the notion of ``symmetrized sets of relators'' by Lyndon
  and Schupp.
\end{defi}

%%%%%%%%%%%%%%%%%%
\subsection{Virtually Abelian groups}

A group is \emph{virtually Abelian} if it contains a finite index
subgroup that is Abelian.

\begin{prop}\label{prop:vAbbasics}
  Let $\Lambda$ be a finitely generated virtually Abelian group. Then
  there exists a short exact sequence \textup{(}of groups\textup{)} of
  the form
  \[ \xymatrix{%
      1 \ar[r]
      & A \ar[r]
      & \Lambda \ar[r]
      & F \ar[r]
      & 1
    },
  \]
  where $F$ is a finite group and $A \cong \Z^d$ for some~$d \in \N$.
  In particular, $\Lambda$ has a finite presentation and is residually
  finite.
\end{prop}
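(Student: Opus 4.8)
The plan is to produce the short exact sequence by extracting a suitable normal Abelian subgroup of finite index, then to deduce finite presentability and residual finiteness as formal consequences. First I would start from the definition: $\Lambda$ contains some Abelian subgroup~$B$ of finite index. Since $\Lambda$ is finitely generated, $B$ has only finitely many conjugates in~$\Lambda$, and their intersection~$A := \bigcap_{\lambda \in \Lambda} \lambda B \lambda^{-1}$ is again of finite index in~$\Lambda$ (a finite intersection of finite-index subgroups) and is \emph{normal} in~$\Lambda$. As a subgroup of the Abelian group~$B$, the group $A$ is Abelian; being a subgroup of the finitely generated group~$\Lambda$ of finite index, it is itself finitely generated, hence $A \cong \Z^d \oplus T$ for some~$d \in \N$ and some finite Abelian group~$T$, by the structure theorem for finitely generated Abelian groups. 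To arrange $A \cong \Z^d$ exactly, note that the torsion subgroup~$T$ of~$A$ is characteristic in~$A$ and $A$ is normal in~$\Lambda$, so $T$ is normal in~$\Lambda$; replacing $A$ by~$T$-torsion-free part is not canonical, so instead I would pass to~$\Lambda / T$: this is again finitely generated virtually Abelian, and $A/T \cong \Z^d$ is a torsion-free Abelian normal finite-index subgroup of it. However, the statement is about~$\Lambda$ itself, so the cleanest route is simply to take $A$ to be a torsion-free finite-index normal Abelian subgroup directly; such a subgroup exists because $\Z^d \oplus T$ contains $\Z^d \oplus 0$ with finite index, and this subgroup $\Z^d \oplus 0$ is characteristic in~$A$ (it is the set of elements of infinite order together with~$0$, equivalently the image of multiplication by~$|T|$), hence normal in~$\Lambda$. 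Setting $F := \Lambda / A$, which is finite since $[\Lambda : A] < \infty$, yields the desired short exact sequence.

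For finite presentability: $\Z^d$ is finitely presented, $F$ is finite hence finitely presented, and an extension of a finitely presented group by a finitely presented group is finitely presented (a standard fact — one lifts generators and relations of~$F$ to~$\Lambda$, adds generators of~$A$, and adds the finitely many relations expressing how the lifted generators of~$F$ act on~$A$ by conjugation together with the relations of~$A$ and the correction words for the relations of~$F$). For residual finiteness: $A \cong \Z^d$ is residually finite, and $\Lambda$ is a finite extension of~$A$; since residual finiteness is inherited by finite-index overgroups (if $A \le \Lambda$ has finite index and $A$ is residually finite, then $\Lambda$ is residually finite — indeed every finite-index subgroup of~$\Lambda$ contains a further finite-index subgroup that is normal in~$\Lambda$, so one can produce enough finite quotients of~$\Lambda$ separating any nontrivial element, using that $A$ has arbitrarily small finite-index characteristic subgroups such as $n A$), it follows that $\Lambda$ is residually finite.

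The main obstacle — really the only point requiring care — is the passage from an arbitrary finite-index Abelian subgroup to a finite-index Abelian subgroup that is both \emph{normal} in~$\Lambda$ and \emph{torsion-free} (isomorphic to~$\Z^d$ on the nose, not merely virtually so). Normality is handled by the core construction above; torsion-freeness is handled by observing that inside a finitely generated Abelian group the subgroup generated by multiplying everything by the exponent of the torsion part is characteristic, finite-index, and torsion-free, so it survives the normality requirement. Everything else — finite presentability of extensions, residual finiteness of finite extensions of~$\Z^d$ — is standard group theory that I would cite or indicate in one line rather than prove in detail.
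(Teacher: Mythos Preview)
Your overall strategy matches the paper's: extract a finite-index normal free Abelian subgroup, then read off the extension. There is, however, a genuine error at the key step. The subgroup~$\Z^d \oplus 0$ is \emph{not} characteristic in~$A \cong \Z^d \oplus T$. Neither of your two descriptions of it is correct: it is not the set of infinite-order elements together with~$0$ (for instance, $(1,1) \in \Z \oplus \Z/2$ has infinite order but lies outside~$\Z \oplus 0$), and it is not the image of multiplication by~$|T|$ (that image is~$|T|\cdot\Z^d \oplus 0$, a proper subgroup when~$|T|>1$). Concretely, the map~$\Z \oplus \Z/2 \to \Z \oplus \Z/2$ sending~$(1,0)\mapsto(1,1)$, $(0,1)\mapsto(0,1)$ is an automorphism that does not preserve~$\Z \oplus 0$.

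Your second description nearly contains its own repair: the subgroup~$|T|\cdot A = |T|\cdot\Z^d \oplus 0$ \emph{is} characteristic in~$A$ (multiplication by a fixed integer commutes with every automorphism), is free Abelian of rank~$d$, and has finite index in~$A$, hence in~$\Lambda$; being characteristic in the normal subgroup~$A$, it is normal in~$\Lambda$. With this correction the argument goes through. The paper avoids the issue by a slightly different manoeuvre: it takes an arbitrary finite-index free Abelian subgroup of~$B$ (no characteristicity needed) and then passes to its normal core in~$\Lambda$; a finite-index subgroup of~$\Z^d$ is again isomorphic to~$\Z^d$, so the core remains free Abelian of the correct rank. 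Both routes are equally short once the slip is fixed. Your treatment of finite presentability and residual finiteness is more detailed than the paper's (which simply asserts these consequences) and is correct.
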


\begin{proof}
  Let $\Lambda$ be a finitely generated virtually Abelian group. We
  only need to show that $\Lambda$ contains a finite-index finitely
  generated free Abelian normal subgroup. By hypothesis, $\Lambda$
  contains a finite-index Abelian normal subgroup~$B$. Since $\Lambda$
  is finitely generated we see that $B$ is also finitely
  generated. The normal core of a finite index free Abelian
  subgroup of~$B$ now has all the required
  properties.
\end{proof}

In Proposition~\ref{prop:vAbconstructive}, we will explain how a 
constructive description of virtually Abelian groups can be obtained
from any finite presentation. 

%%%%%%%%%%%%%%%%%%%%%%%%%%%%%%%%%%%%%%%%%%%%%%%%%%%%%%%%%%%%%%%%%%
\section{Characterisations of existence of epimorphisms}\label{sec:prep}

\begin{setup}\label{set:epi}
  Let $\genrel SR$ be a symmetric finite presentation, let $\Gamma :=
  \bigl| \genrel SR\bigr|$ be the corresponding group, and let
  $\Lambda$ be a finitely generated virtually Abelian group, fitting
  into a short exact sequence
  \[ \xymatrix{%
      1 \ar[r]
      & A \ar[r]^{i}
      & \Lambda \ar[r]^{\pi}
      & F \ar[r]
      & 1
    },
  \]
  where $F$ is a finite group, $A \cong_\Z \Z^d$, and $i$ is the inclusion
  of a subgroup.
  %Moreover, let $\sigma \colon F \longrightarrow \Lambda$
  %be a set-theoretic section of~$\pi$ (i.e., $\pi \circ \sigma =\id_F$).
\end{setup}

The key idea is to split the epimorphism problem~$\Gamma
\longrightarrow \Lambda$ into finitely many cases by
\begin{itemize}
\item first determining all epimorphisms~$\Gamma \longrightarrow F$,
\item and then checking for each epimorphism~$\varphi \colon \Gamma
  \longrightarrow F$ whether there exists an epimorphism~$\Gamma
  \longrightarrow \Lambda$ that induces~$\varphi$.
\end{itemize}

%%%%%%%%%%%%%%%%%%%%%
\subsection{An abstract characterisation}

\begin{defi}[lifting/epimorphism set]\label{def:LEK}
  In the situation of Setup~\ref{set:epi}, let $\varphi \colon \Gamma
  \longrightarrow F$ be an epimorphism and let $K := \ker \varphi
  \subset \Gamma$. Then we write (Figure~\ref{fig:basicdiag})
  \begin{align*}
    L(\varphi)
    & := \bigl\{ \widetilde\varphi \in \Hom(\Gamma,\Lambda) \bigm|
    \pi \circ \widetilde \varphi = \varphi
    \bigr\}
    \\
    %E(\varphi)
    %& := \bigl\{ \widetilde\varphi \in L(\varphi) \bigm|
    %\widetilde\varphi(\Gamma) = \Lambda
    %\bigr\}
    %\\
    K(\varphi)
    & := \bigl\{ \widetilde\varphi|_K \bigm| \widetilde\varphi \in L(\varphi) \bigr\}
    \\
    E(\varphi)
    & := \bigl\{ \psi \in K(\varphi) \bigm| \psi(K) = A \bigr\}.
  \end{align*}
\end{defi}

\begin{figure}
  \begin{align*}
    \xymatrix@C=4em@R=1.75em{%
      1 \ar[d]
      &
      1 \ar[d]
      \\
      K \ar[d] \ar@{-->}[r]^-{\widetilde\varphi|_K}
      &
      A \ar[d]^-{i}
      \\
      \Gamma \ar[dr]_-{\varphi} \ar@{-->}[r]^-{\widetilde \varphi}
      &
      \Lambda \ar[d]^-{\pi}
      \\
      &
      F \ar[d]
      \\
      &
      1
    }
  \end{align*}
 
  \caption{Some basic notation (Definition~\ref{def:LEK})}
  \label{fig:basicdiag}
\end{figure}

\begin{prop}\label{prop:phiepi}
  In the situation of Setup~\ref{set:epi}, let $\varphi \colon \Gamma
  \longrightarrow F$ be an epimorphism. Then there exists an
  epimorphism~$\widetilde \varphi \colon \Gamma \longrightarrow
  \Lambda$ with~$\pi \circ \widetilde \varphi = \varphi$ if and only
  if~$E(\varphi)$ is non-empty.
\end{prop}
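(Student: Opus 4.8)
The plan is to prove both implications of the equivalence, with the nontrivial direction being the construction of an epimorphism from an element of $E(\varphi)$.

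First, the easy direction: suppose $\widetilde\varphi\colon\Gamma\longrightarrow\Lambda$ is an epimorphism with $\pi\circ\widetilde\varphi=\varphi$. Then by definition $\widetilde\varphi\in L(\varphi)$, so $\psi:=\widetilde\varphi|_K\in K(\varphi)$. It remains to check $\psi(K)=A$, i.e.\ that $\widetilde\varphi(K)=A$. Since $\pi\circ\widetilde\varphi=\varphi$ and $K=\ker\varphi$, the image $\widetilde\varphi(K)$ is contained in $\ker\pi=i(A)$, which we identify with $A$. Conversely, because $\widetilde\varphi$ is surjective, a diagram chase gives $\widetilde\varphi(K)=A$: given $a\in A$, pick $g\in\Gamma$ with $\widetilde\varphi(g)=i(a)$; then $\varphi(g)=\pi(i(a))=1$, so $g\in K$. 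Hence $\psi\in E(\varphi)$ and $E(\varphi)\neq\emptyset$.

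For the converse, suppose $E(\varphi)\neq\emptyset$, so there is $\widetilde\varphi\in L(\varphi)$ with $\widetilde\varphi(K)=A$. We claim this very $\widetilde\varphi$ is already surjective. Indeed, $\pi\circ\widetilde\varphi=\varphi$ is surjective onto $F$, so $\widetilde\varphi(\Gamma)$ surjects onto $F=\Lambda/i(A)$; together with $i(A)=\widetilde\varphi(K)\subseteq\widetilde\varphi(\Gamma)$ this forces $\widetilde\varphi(\Gamma)=\Lambda$ by the standard fact that a subgroup containing the kernel of a quotient map and mapping onto the quotient is the whole group. Thus $\widetilde\varphi$ is the desired epimorphism lifting $\varphi$.

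I expect no serious obstacle here: the statement is essentially a bookkeeping lemma packaging the diagram in Figure~\ref{fig:basicdiag}, and the only point requiring a moment's care is making sure the identification of $A$ with its image $i(A)\subseteq\Lambda$ is used consistently (the condition ``$\psi(K)=A$'' in Definition~\ref{def:LEK} is implicitly read via~$i$). I would write the proof as two short paragraphs mirroring the two directions above, with a one-line diagram-chase each.

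\begin{proof}
  Suppose first that there exists an epimorphism~$\widetilde\varphi
  \colon \Gamma \longrightarrow \Lambda$ with~$\pi \circ
  \widetilde\varphi = \varphi$. Then $\widetilde\varphi \in
  L(\varphi)$, so $\psi := \widetilde\varphi|_K \in K(\varphi)$. We
  check that $\psi(K) = A$ (identifying $A$ with~$i(A) \subset
  \Lambda$). On the one hand, $\pi \circ \widetilde\varphi = \varphi$
  and $K = \ker\varphi$ imply $\widetilde\varphi(K) \subset \ker\pi =
  i(A)$. On the other hand, given~$a \in A$, surjectivity
  of~$\widetilde\varphi$ yields~$g \in \Gamma$ with~$\widetilde\varphi(g) =
  i(a)$; then $\varphi(g) = \pi(\widetilde\varphi(g)) = \pi(i(a)) = 1$,
  so~$g \in K$ and~$i(a) \in \widetilde\varphi(K)$. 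Hence
  $\widetilde\varphi(K) = i(A)$, i.e.\ $\psi \in E(\varphi)$, so
  $E(\varphi) \neq \emptyset$.

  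Conversely, suppose $E(\varphi) \neq \emptyset$. By definition of
  $E(\varphi)$ and $K(\varphi)$, there exists~$\widetilde\varphi \in
  L(\varphi)$ with~$\widetilde\varphi(K) = i(A)$. We claim
  $\widetilde\varphi$ is surjective. Since $\pi \circ \widetilde\varphi
  = \varphi$ is surjective, the composition~$\Gamma
  \xrightarrow{\widetilde\varphi} \Lambda \xrightarrow{\pi} F$ is onto,
  so for each~$\lambda \in \Lambda$ there is~$g \in \Gamma$
  with~$\pi(\widetilde\varphi(g)) = \pi(\lambda)$; then
  $\widetilde\varphi(g)^{-1}\lambda \in \ker\pi = i(A) =
  \widetilde\varphi(K)$, so $\widetilde\varphi(g)^{-1}\lambda =
  \widetilde\varphi(k)$ for some~$k \in K$, whence $\lambda =
  \widetilde\varphi(gk) \in \widetilde\varphi(\Gamma)$. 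Thus
  $\widetilde\varphi \colon \Gamma \longrightarrow \Lambda$ is an
  epimorphism with~$\pi \circ \widetilde\varphi = \varphi$.
\end{proof}
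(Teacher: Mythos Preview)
Your proof is correct and follows essentially the same approach as the paper: both directions proceed by the same diagram chase, showing $\widetilde\varphi(K)=i(A)$ from surjectivity of~$\widetilde\varphi$, and conversely writing an arbitrary~$\lambda\in\Lambda$ as a product of an element in~$\im\widetilde\varphi$ and an element of~$i(A)=\widetilde\varphi(K)$. The paper's proof is slightly more compact in the forward direction by packaging both inclusions into the single observation $\im(\widetilde\varphi)\cap A=\widetilde\varphi(K)$, but the content is identical.
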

\begin{proof}
  We begin with a simple observation: If $\widetilde \varphi \in
  L(\varphi)$ and $K = \ker \varphi$, then
  \[   \im(\widetilde \varphi) \cap A
     = \im(\widetilde \varphi) \cap \ker \pi
     = \widetilde \varphi(K).
  \]

  Hence, if $\widetilde \varphi \in L(\varphi)$ is an epimorphism, then
  $\widetilde \varphi(K) = A$.

  Conversely, suppose that~$A = \widetilde \varphi(K)$. If $y \in \Lambda$,
  then we can write $y$ in the form
  \[ y = y' \cdot a
  \]
  where $y' \in \im \widetilde \varphi$ and $a \in A$ (because $\pi
  \circ \widetilde \varphi = \varphi$ is surjective). Therefore, there
  exist~$x' \in \Gamma$ and $k \in K$ with $y' = \widetilde
  \varphi(x')$ and $a= \widetilde \varphi(k)$. In particular,
  \[ \widetilde \varphi(x' \cdot k)
     = y' \cdot a = y,
  \]
  which shows that $\widetilde \varphi(\Gamma) = \Lambda$.
\end{proof}

\begin{cor}\label{cor:abstractepi}
  In the situation of Setup~\ref{set:epi}, the following are
  equivalent:
  \begin{enumerate}
  \item There exists an epimorphism~$\Gamma \longrightarrow \Lambda$.
  \item There exists an epimorphism~$\varphi \colon \Gamma \longrightarrow F$
    with~$E(\varphi) \neq \emptyset$.
  \end{enumerate}
\end{cor}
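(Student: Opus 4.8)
The plan is to deduce both implications directly from Proposition~\ref{prop:phiepi}, so the argument will be very short; all the real work has already been done in that proposition.

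For the implication (2)~$\Rightarrow$~(1): if $\varphi \colon \Gamma \longrightarrow F$ is an epimorphism with $E(\varphi) \neq \emptyset$, then Proposition~\ref{prop:phiepi} immediately yields an epimorphism $\widetilde\varphi \colon \Gamma \longrightarrow \Lambda$ (it even satisfies $\pi \circ \widetilde\varphi = \varphi$, but the factorisation is not needed for the statement). In particular, there exists an epimorphism $\Gamma \longrightarrow \Lambda$.

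For the implication (1)~$\Rightarrow$~(2): suppose $\widetilde\varphi \colon \Gamma \longrightarrow \Lambda$ is an epimorphism, and set $\varphi := \pi \circ \widetilde\varphi$. Since $\pi$ and $\widetilde\varphi$ are both surjective, $\varphi$ is an epimorphism $\Gamma \longrightarrow F$. By construction $\widetilde\varphi \in L(\varphi)$, and $\widetilde\varphi$ is surjective, so the first part of the proof of Proposition~\ref{prop:phiepi} shows that $\widetilde\varphi(K) = A$, where $K = \ker\varphi$. Hence $\widetilde\varphi|_K \in K(\varphi)$ satisfies $\widetilde\varphi|_K(K) = A$, i.e.\ $\widetilde\varphi|_K \in E(\varphi)$, and therefore $E(\varphi) \neq \emptyset$.

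There is essentially no obstacle here: the only point that deserves to be made explicit is that $\pi \circ \widetilde\varphi$ is surjective whenever $\widetilde\varphi$ is, which then feeds into the ``only if'' half of Proposition~\ref{prop:phiepi}. The substantive content — the equivalence between surjectivity of a lift and the condition $\widetilde\varphi(K) = A$ — has already been packaged into that proposition.
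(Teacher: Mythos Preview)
Your proof is correct and follows exactly the same approach as the paper: both implications are read off from Proposition~\ref{prop:phiepi}, with the only added observation being that $\varphi := \pi \circ \widetilde\varphi$ is surjective whenever $\widetilde\varphi$ is. The paper's version is slightly terser but otherwise identical.
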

\begin{proof}
  Let $\widetilde \varphi \colon \Gamma \longrightarrow \Lambda$ be an
  epimorphism.  Then $\varphi := \pi \circ \widetilde \varphi \colon
  \Gamma \longrightarrow F$ is an epimorphism and thus $E(\varphi)
  \neq \emptyset$ by Proposition~\ref{prop:phiepi}.

  Conversely, if $\varphi \colon \Gamma \longrightarrow F$
  satisfies~$E(\varphi) \neq \emptyset$, then
  Proposition~\ref{prop:phiepi} shows in particular that there exists
  an epimorphism~$\Gamma \longrightarrow \Lambda$.
\end{proof}

%%%%%%%%%%%%%%%%%%%
\subsection{Translation to linear algebra}\label{subsec:translationLA}

In view of Corollary~\ref{cor:abstractepi}, the epimorphism
problem for~$\Lambda$ basically reduces to checking whether for given
epimorphisms~$\varphi \colon \Gamma \longrightarrow F$, the
set~$E(\varphi)$ is non-empty or not. However, in general, the
set~$K(\varphi)$ is \emph{not} finite. Therefore, in order to be able
to handle~$E(\varphi)$ it will be useful to have an efficient
description/parametrisation of~$K(\varphi)$. We will now give such
a description in terms of (integral) linear algebra:

\begin{setup}\label{set:epic}
  In the situation of Setup~\ref{set:epi}, we add a choice of a set-theoretic
  section~$\sigma \colon F \longrightarrow \Lambda$ to our data.
\end{setup}

% one could be more pedantic about the difference between words
% and elements in the free group, but I hope that we don't have
% to do that

\begin{defi}\label{def:LEKc}
  In the situation of Setup~\ref{set:epic}, let $\varphi \colon \Gamma
  \longrightarrow F$ be an epimorphism, let $K :=\ker \varphi \subset
  \Gamma$, and let $T \subset \Free(S)$ be a finite set representing a
  generating set of~$K$.
  \begin{itemize}
  \item A map~$f \colon S \longrightarrow A$ is \emph{hom-like} if
    %the unique extension of
    the map
    \begin{align*}
      \sigma * f \colon
      S & \longrightarrow \Lambda \\
      s & \longmapsto \sigma\bigl(\varphi(s)\bigr) \cdot f(s)
    \end{align*}
    % to a group homomorphism~$\Free(S) \longrightarrow A$ 
    induces a well-defined group homomorphism~$\Gamma = \bigl|\genrel
    SR\bigr| \longrightarrow \Lambda$. If~$f$ is hom-like, we denote
    this homomorphism by~$\psi_f \colon \Gamma \longrightarrow \Lambda$.
  \item
    We then set 
    \begin{align*}
      \LL(\varphi)
      & := \bigl\{ f \in \map(S,A)
      \bigm| \text{$f$ is hom-like}\bigr\}% and $\pi \circ \psi_f|_S = \varphi|_S$} \bigr\}
      \\
      \KK(\varphi)
      & := \bigl\{ \psi_f|_T \in \map(T,A) \bigm| f \in \LL(\varphi)\bigr\}
      \\
      \EE(\varphi)
      & := \bigl\{ f \in \KK(\varphi) \bigm| \text{$f(T)$ generates~$A$} \bigr\}. 
    \end{align*}
  \end{itemize}
\end{defi}

The notion of being hom-like depends on both~$\varphi$ and $\sigma$.
A straightforward calculation shows that the notation in
Definition~\ref{def:LEKc} is just a translation of
Definition~\ref{def:LEK} into a more explicit framework:

\begin{rem}\label{rem:LEKvsLEKc}
  In the situation of Setup~\ref{set:epic}, 
  let $\varphi \colon \Gamma \longrightarrow F$ be an epimorphism, 
  let $K :=\ker \varphi \subset \Gamma$, and let $T \subset \Free(S)$
  be a finite set representing a generating set in~$K$. Then the
  diagram
  \[ \xymatrix@C=4em@R=1.75em{%
    L(\varphi)
    \ar[r]^-{\args|_S}
    \ar[d]_-{\args|_K}
    & \LL(\varphi)
    \ar[d]^-{\psi_{\args}|_T}
    \\
    K(\varphi)
    \ar[r]^-{\args|_T}
    & \KK(\varphi)
    \\
    E(\varphi)
    \ar[r]_-{\args|_T}
    \ar@{^{(}->}[u]
    & \EE(\varphi)
    \ar@{^{(}->}[u]
    }
  \]
  is commutative and all three horizontal maps are bijections.
\end{rem}

\begin{cor}\label{cor:concreteepi}
  In the situation of Setup~\ref{set:epic}, the following are equivalent:
  \begin{enumerate}
  \item There exists an epimorphism~$\Gamma\longrightarrow \Lambda$.
  \item There exists an epimorphism~$\varphi \colon \Gamma \longrightarrow F$
    with~$\EE(\varphi) \neq \emptyset$.
  \end{enumerate}
\end{cor}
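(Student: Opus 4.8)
The plan is to deduce this immediately by combining the abstract characterisation of Corollary~\ref{cor:abstractepi} with the translation dictionary of Remark~\ref{rem:LEKvsLEKc}. By Corollary~\ref{cor:abstractepi}, statement~(1) holds if and only if there is an epimorphism~$\varphi \colon \Gamma \longrightarrow F$ with~$E(\varphi) \neq \emptyset$. Hence it suffices to show that, for each such~$\varphi$, the set~$E(\varphi)$ is non-empty precisely when~$\EE(\varphi)$ is non-empty.

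To see this, I would first invoke Setup~\ref{set:epic} to fix a set-theoretic section~$\sigma \colon F \longrightarrow \Lambda$, and then choose a finite subset~$T \subset \Free(S)$ whose image is a generating set of~$K := \ker \varphi$; such a~$T$ exists because~$K$ has finite index in the finitely generated group~$\Gamma$, and finite-index subgroups of finitely generated groups are finitely generated. With these choices, Remark~\ref{rem:LEKvsLEKc} tells us that the restriction map~$E(\varphi) \longrightarrow \EE(\varphi)$, $\psi \longmapsto \psi|_T$, is a bijection. In particular~$E(\varphi) \neq \emptyset$ if and only if~$\EE(\varphi) \neq \emptyset$, and chasing the commutative diagram in Remark~\ref{rem:LEKvsLEKc} shows that this equivalence is independent of the auxiliary choices of~$\sigma$ and~$T$. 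Combining this with Corollary~\ref{cor:abstractepi} gives the equivalence of~(1) and~(2).

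The only substantive content is thus already packaged inside Remark~\ref{rem:LEKvsLEKc}, which we are free to assume: the map~$f \mapsto \psi_f$ is a bijection~$\LL(\varphi) \longrightarrow L(\varphi)$ because a homomorphism~$\Gamma \longrightarrow \Lambda$ lifting~$\varphi$ is determined by its values on~$S$, each value projecting to~$\varphi(s)$ under~$\pi$ and hence being uniquely of the form~$\sigma(\varphi(s)) \cdot f(s)$ with~$f(s) \in A$, while the hom-like condition is exactly the requirement that these assignments respect the relations in~$R$; compatibility with the condition ``$\psi(K) = A$'' versus ``$f(T)$ generates~$A$'' uses the identity~$\im(\widetilde\varphi) \cap A = \widetilde\varphi(K)$ from the proof of Proposition~\ref{prop:phiepi} together with the fact that~$T$ generates~$K$. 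Accordingly, I do not expect any real obstacle here: the main (and very mild) point is the bookkeeping needed to confirm that the three squares in Remark~\ref{rem:LEKvsLEKc} commute and that the vertical maps are well defined, after which the corollary follows in a couple of lines.
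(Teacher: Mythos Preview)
Your proposal is correct and follows exactly the same approach as the paper: the paper's proof consists of the single sentence ``We only need to combine Corollary~\ref{cor:abstractepi} with the translation from Remark~\ref{rem:LEKvsLEKc}.'' Your version simply unpacks this a bit more (existence of a finite~$T$, independence of the auxiliary choices), which is fine but not required.
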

\begin{proof}
  We only need to combine Corollary~\ref{cor:abstractepi} with the
  translation from Remark~\ref{rem:LEKvsLEKc}.
\end{proof}

Hence, the epimorphism problem for~$\Lambda$ reduces to deciding
whether, for a given epimorphism~$\varphi \colon \Gamma
\longrightarrow F$, the set~$\EE(\varphi)$ is non-empty or not (this
will be explained in full detail in the proofs of
Theorem~\ref{thm:prod} and Theorem~\ref{thm:vZ}).

\begin{prop}\label{prop:Kc}
  In the situation of Setup~\ref{set:epic}, let $\varphi \colon \Gamma
  \longrightarrow F$ be an epimorphism, let $K :=\ker \varphi \subset
  \Gamma$, and let $T \subset \Free(S)$ be a finite set representing a
  generating set in~$K$. Then
  \begin{align*}
    \LL(\varphi) & \subset \map(S,A) \\
    \KK(\varphi) & \subset \map(T,A)
  \end{align*}
  are empty or they are affine subspaces of the \textup{(}finitely
  generated free\textup{)} $\Z$-modules~$\map(S,A)$ and $\map(T,A)$,
  respectively \textup{(}with respect to the point-wise module
  structures\textup{)}.
\end{prop}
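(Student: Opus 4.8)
The plan is to show that $\LL(\varphi)$ is either empty or a coset of the linear subspace obtained by taking $A$-valued maps that "kill the relators", and then to transport this statement along the bijection from Remark~\ref{rem:LEKvsLEKc} to obtain the corresponding claim for $\KK(\varphi)$. First I would fix, for the purposes of the argument, a basepoint: assume $\LL(\varphi) \neq \emptyset$ and pick some $f_0 \in \LL(\varphi)$, so that $\psi_{f_0} \colon \Gamma \longrightarrow \Lambda$ is a genuine homomorphism with $\pi \circ \psi_{f_0} = \varphi$. The key point is that hom-likeness of a map $f \colon S \longrightarrow A$ is governed by the finitely many relators $r \in R$: the assignment $s \mapsto \sigma(\varphi(s)) \cdot f(s)$ always defines a homomorphism $\Free(S) \longrightarrow \Lambda$, and it descends to $\Gamma$ exactly when this homomorphism sends every $r \in R$ to $1 \in \Lambda$.

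Next I would compute, for a relator $r \in R$ (a positive word in $S$, by symmetry of the presentation), the image of $r$ under $\sigma * f$ in terms of $f$. Writing $r = s_{i_1} \cdots s_{i_k}$, one expands the product $\sigma(\varphi(s_{i_1})) f(s_{i_1}) \cdots \sigma(\varphi(s_{i_k})) f(s_{i_k})$ and pushes all the $A$-factors to one side using that $A$ is normal in $\Lambda$; since $F = \Lambda/A$ acts on $A \cong \Z^d$ by conjugation, the resulting $A$-component is a sum of terms $g_r \cdot f(s_{i_j})$, where each $g_r$ lies in the automorphism group of $A$ determined by the relevant $F$-element, plus a constant term $c_r \in A$ coming from the $\sigma(\varphi(s_{i_j}))$-part. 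Crucially, this is an \emph{affine} function of $f \in \map(S,A)$: the map $f \mapsto (\text{$A$-component of $(\sigma*f)(r)$})_{r \in R}$ has the form $f \mapsto \Phi(f) + c$ for a $\Z$-linear map $\Phi \colon \map(S,A) \longrightarrow \bigoplus_{r \in R} A$ and a constant $c$. Hom-likeness of $f$ is precisely the condition $\Phi(f) + c = 0$, so $\LL(\varphi)$ is the solution set of an inhomogeneous linear system over $\Z$, hence empty or an affine subspace (a coset of $\ker \Phi$); concretely, if $f_0 \in \LL(\varphi)$ then $\LL(\varphi) = f_0 + \ker\Phi$, since $\Phi(f) + c = 0$ and $\Phi(f_0) + c = 0$ together give $\Phi(f - f_0) = 0$, using linearity of $\Phi$.

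For $\KK(\varphi)$, I would invoke Remark~\ref{rem:LEKvsLEKc}: the restriction map $\LL(\varphi) \longrightarrow \KK(\varphi)$, $f \longmapsto \psi_f|_T$, is a bijection. It remains to see this bijection is compatible with the point-wise module structures, i.e.\ is the restriction of a $\Z$-affine map $\map(S,A) \longrightarrow \map(T,A)$. This follows from the same kind of expansion as above: for $t \in T$ a (positive, after normalising) word in $S \cup S^{-1}$, the $A$-component of $(\sigma * f)(t) = \psi_f(t) \in \Lambda$ is, by the identical normal-form computation, an affine function of $f$, say $f \mapsto \Psi(f) + c'$ with $\Psi$ linear; and $\psi_f(t)$ lies in $A$ to begin with because $t \in K = \ker\varphi$, so $\pi(\psi_f(t)) = \varphi(t) = 1$. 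Hence $\KK(\varphi)$ is the image of the affine subspace $\LL(\varphi)$ under the affine map $f \mapsto \Psi(f) + c'$, and the image of an affine subspace under an affine map of $\Z$-modules is again an affine subspace (it is a coset of $\Psi(\ker\Phi)$, which is a submodule of $\map(T,A)$).

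The main obstacle I expect is purely bookkeeping: carefully carrying out the normal-form computation in $\Lambda$ — repeatedly using $a \cdot g = g \cdot (g^{-1} a g)$ to move $A$-factors past coset representatives, and tracking that the conjugation action of $F$ on $A$ turns each $f(s)$ into an $F$-twisted copy of itself — so as to verify cleanly that the dependence on $f$ is $\Z$-linear (no cross-terms $f(s) \cdot f(s')$ appear, precisely because $A$ is Abelian). Once that computation is organised, the statement follows formally from the elementary fact that solution sets of inhomogeneous linear systems over a $\Z$-module, and images of affine subspaces under affine maps, are affine subspaces or empty.
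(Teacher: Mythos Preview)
Your proposal is correct and follows essentially the same approach as the paper: expand $(\sigma * f)(r)$ by pushing the $A$-factors past the $\sigma(\varphi(s))$-factors via conjugation, observe that the resulting condition on $f$ is an inhomogeneous $\Z$-linear equation, and then transport the conclusion along the surjection $\LL(\varphi)\to\KK(\varphi)$. Two minor remarks: you cite Remark~\ref{rem:LEKvsLEKc} for bijectivity of $f\mapsto\psi_f|_T$, but that remark asserts bijectivity of the \emph{horizontal} maps, not of this vertical one---what you actually need (and use) is only surjectivity, which holds by definition of $\KK(\varphi)$; and you are right to call the map $f\mapsto\psi_f|_T$ affine rather than linear, since the constant term $\prod_j\sigma(\varphi(s_{i_j}))$ need not vanish---the paper glosses over this but the conclusion is unaffected.
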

\begin{proof}
  Because the map~$\psi_{\args}|_T \colon \LL(\varphi) \longrightarrow
  \KK(\varphi)$ is $\Z$-linear and surjective (by construction), it
  suffices to show that $\LL(\varphi)$ is an affine subspace
  of~$\map(S,A)$ or empty.  By construction, we have
  \[ \LL(\varphi) = \bigcap_{r \in R} \LL(\varphi,r), 
  \]
  where
  \begin{align*}
    \LL(\varphi,r) & := \bigl\{ f \in \map(S,A)
     \bigm| \Free(\sigma * f)(r) = e \bigr\}
  \end{align*}
  (and $\Free(\sigma * f) \colon \Free(S) \longrightarrow \Lambda$
  denotes the unique group homomorphism extending the map~$\sigma *
  f$). Thus, it suffices to show that for each~$r \in R$, the
  set~$\LL(\varphi,r)$ is an affine subspace of~$\map(S,A)$. We will
  accomplish this by interpreting~$\LL(\varphi,r)$ as solution space
  of a suitable (inhomogeneous) $\Z$-linear equation.

  Let $r \in R$, say~$r = s_1 \dots s_m$ with~$s_1,\dots, s_m \in
  S$. Moreover, let
  \[ \lambda_j := \sigma\bigl(\varphi(s_j)\bigr)
  \]
  for each~$j \in \{1,\dots,m\}$; then $\pi(\lambda_1 \cdot \dots
  \cdot \lambda_m) = \varphi(r) = e$ and thus the product~$\lambda :=
  \lambda_1 \cdot \dots \cdot \lambda_m$ lies in~$A$.  We now proceed
  as follows:
  
    Let $f \in \map(S,A)$ and, for~$j \in \{1,\dots.m\}$, let
    \[ x_j := f(s_j).
    \]
    Because $A$ is a normal subgroup of~$\Lambda$, for each~$\lambda
    \in \Lambda$, the conjugation homomorphism~$C_\lambda \colon
    \Lambda \longrightarrow \Lambda$ by~$\lambda$ restricts to an
    automorphism of~$A$.  We then have
    \begin{align*}
      \Free(\sigma * f) (r)
      & = \lambda_1 \cdot x_1 \cdot \dots \cdot \lambda_m \cdot x_m
      \\
      & = C_{\lambda_1}(x_1) \cdot C_{\lambda_1\cdot \lambda_2}(x_2)
      \cdot \dots \cdot C_{\lambda_1 \cdot \dots \cdot \lambda_m}(x_m)
      \cdot \lambda_1 \cdot \dots \cdot \lambda_m.
    \end{align*}
    Hence, $f$ lies in~$\LL(\varphi,r)$ if and only if
    \[ e = C_{\lambda_1}(x_1) \cdot C_{\lambda_1\cdot \lambda_2}(x_2)
      \cdot \dots \cdot C_{\lambda_1 \cdot \dots \cdot \lambda_m}(x_m)
      \cdot \lambda_1 \cdot \dots \cdot \lambda_m.
    \]
    Because~$\lambda = \lambda_1 \cdot \dots \cdot \lambda_m \in A$
    and the $C_{\lambda_j}$ are automorphisms of~$A$, we can
    reformulate this condition equivalently as the additive linear
    inhomogeneous equation
    \begin{align}
      C_{\lambda_1}(x_1) + C_{\lambda_1\cdot\lambda_2}(x_2) + \dots + C_{\lambda_1\cdot \dots\cdot\lambda_m}(x_m)
       = - \lambda. \label{eq:lineq}
    \end{align}
    This shows that $\LL(\varphi,r)$ is the solution set of an
    (inhomogeneous) $\Z$-linear equation in the $A$-valued
    variables~``$f(s)$'' with~$s \in S$.  More precisely,
    $\LL(\varphi,r)$ is the preimage of~$-\lambda$ under the
    $\Z$-linear map
    \begin{align*}
      A^S & \longrightarrow A \\
      (y_s)_{s\in S} & \longmapsto
      \sum_{s \in S} \sum_{j \in \{1,\dots, m\} \text{ with $s_j = s$}} C_{\lambda_1 \cdot \dots \cdot \lambda_j}(y_s).
      \qedhere
    \end{align*}
\end{proof}

\begin{rem}\label{rem:computeK}
  In the situation of Proposition~\ref{prop:Kc}, a finite ``generating
  set'' for~$\KK(\varphi)$ can be computed from the given data, a
  basis of~$A$, and matrices for the conjugations~$C_{\sigma(x)}$
  with~$x \in F$. We just need to follow the proof of
  Proposition~\ref{prop:Kc}:
  \begin{itemize}
  \item Using a basis of~$A$, we can write down the corresponding
    ``dual'' bases of~$\map(S,A)$ and~$\map(T,A)$, respectively.
  \item For~$r \in R$, we check whether $\varphi(r) = e$ or not
    (using a multiplication table for~$F$).
    \begin{itemize}
    \item
      If $\varphi(r) \neq e$, then $\LL(\varphi,r) = \emptyset$.
    \item
      If $\varphi(r) = e$, then we can write down the corresponding
      inhomogeneous $\Z$-linear equation~\eqref{eq:lineq}
      for~$\LL(\varphi,r)$. Here, we use the fact that
      \[ C_{\lambda_j} = C_{\sigma(\pi(\lambda_j))} 
      \]
      holds for all~$j \in \{1,\dots,m\}$ (because $A$ is Abelian).
      Using the Smith normal form
      algorithm~\cite[Algorithm~2.4.14]{cohen} we can then determine
      whether $\LL(\varphi,r)$ is empty or not. In the latter case, we
      can compute a finite set~$X \subset \map(S,A)$ and a~$b \in
      \map(S,A)$ with
      \[ \LL(\varphi,r) = \Span_\Z X_r + b_r.
      \]
    \end{itemize}
  \item Note that the finite intersection~$\LL(\varphi) = \bigcap_{r
    \in R} \LL(\varphi,r)$ is just the solution set to an affine
    linear equation system. Thus once again using the Smith normal
    form algorithm~\cite[Algorithm~2.4.14]{cohen} we can compute the
    finite intersection~$\LL(\varphi) = \bigcap_{r \in R}
    \LL(\varphi,r)$ in the following sense: We compute whether this
    intersection is empty or not; in the latter case, we compute a
    finite set~$X \subset \map(S,A)$ and an offset~$b \in \map(S,A)$
    with
    \[ \LL(\varphi) = \Span_\Z X + b.
    \]
  \item Because the (surjective) linear map~$\psi_{\args}|_T \colon
    \LL(\varphi) \longrightarrow \KK(\varphi)$ admits an explicit
    description (e.g., by a matrix), we can also compute a
    corresponding description for~$\KK(\varphi)$.
  \end{itemize}
\end{rem}

%%%%%%%%%%%%
\subsection{Product targets}

When dealing with targets that are a product of a finitely generated
free Abelian group and a finite group, the following alternative
description will be convenient:

\begin{setup}\label{set:ab}
  Let $\genrel SR$ be a symmetric finite presentation, let $\Gamma :=
  \bigl|\genrel SR\bigr|$ be the corresponding group, let $A$ be an
  Abelian group (in the applications, this will be~$\Z^d$), let $F$ be
  a finite group, and let
  \[ \Lambda := A \times F.
  \]
  We write~$i \colon A \longrightarrow \Lambda$ for the inclusion as
  first factor and $\pi \colon \Lambda \longrightarrow F$ for the
  projection onto the second factor.
\end{setup}

\begin{defi}\label{def:notationab}
  In the situation of Setup~\ref{set:ab}, let $\varphi \colon \Gamma
  \longrightarrow F$ be an epimorphism, let $K :=\ker \varphi \subset
  \Gamma$, and let $\kappa \colon K_{\ab} \longrightarrow
  \Gamma_{\ab}$ be the homomorphism on the Abelianisations induced by
  the inclusion~$K \longrightarrow \Gamma$. Then, we write
  (Figure~\ref{fig:basicdiagab})
  \begin{align*}
    \Lab (\varphi) & := \Hom(\Gamma_{\ab}, A)
    \\
    \Kab (\varphi) & := \bigl\{ f \circ \kappa \in \Hom(K_{\ab}, A) \bigm| f \in \Lab(\varphi) \bigr\}
    \\
    \Eab (\varphi) & := \bigl\{ f \in \Kab(\varphi) \bigm| f(K_{\ab}) = A\bigr\}.
  \end{align*}
\end{defi}

\begin{figure}
  \begin{align*}
    \xymatrix@C=4em@R=1.75em{%
      K \ar@{->>}[r]^-{\pi_K} \ar[d]
      &
      K_{\ab} \ar@{-->}[r]^-{f\circ \kappa} \ar[d]^-{\kappa}
      &
      A \ar[d]^-{i}
      \\
      \Gamma \ar@{->>}[r]^-{\pi_\Gamma}
      \ar@{-->}@/_3ex/[rr]_{(f \circ \pi_\Gamma,\varphi)}
      &
      \Gamma_{\ab} \ar@{-->}[ur]_-{f}
      &
      A \times F
    }
  \end{align*}

  \caption{Some basic notation in the product case (Definition~\ref{def:notationab})}
  \label{fig:basicdiagab}
\end{figure}

\begin{prop}\label{prop:KEprod}
  In the situation of Setup~\ref{set:ab}, the following are equivalent:
  \begin{enumerate}
  \item There exists an epimorphism~$\Gamma \longrightarrow A \times F$.
  \item There exists an epimorphism~$\varphi \colon \Gamma \longrightarrow F$
    with~$\Eab(\varphi) \neq \emptyset$.
  \end{enumerate}
\end{prop}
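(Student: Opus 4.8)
The plan is to reduce this to Corollary~\ref{cor:concreteepi} (equivalently Corollary~\ref{cor:abstractepi}) by checking that in the special case $\Lambda = A \times F$ the data of Definition~\ref{def:notationab} match the data of Definition~\ref{def:LEK}/\ref{def:LEKc}, modulo the observation that homomorphisms into $A$ (an Abelian group) factor through abelianisations. So first I would fix an epimorphism $\varphi \colon \Gamma \longrightarrow F$ and set $K := \ker \varphi$. The key point is that giving a homomorphism $\widetilde\varphi \colon \Gamma \longrightarrow A \times F$ with $\pi \circ \widetilde\varphi = \varphi$ is the same as giving its first component $g \colon \Gamma \longrightarrow A$ (an arbitrary homomorphism, since the second component is forced to be $\varphi$); and since $A$ is Abelian, $g$ factors uniquely as $g = f \circ \pi_\Gamma$ for a unique $f \in \Hom(\Gamma_{\ab}, A) = \Lab(\varphi)$. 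Thus $L(\varphi)$ is in natural bijection with $\Lab(\varphi)$ via $\widetilde\varphi \mapsto f$.

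Next I would chase the restriction to $K$. Under the bijection above, $\widetilde\varphi|_K$ is the homomorphism $K \longrightarrow A$ whose first component is $g|_K = f \circ \pi_\Gamma \circ (\text{incl}) = f \circ \kappa \circ \pi_K$ (using the commutative square relating $\pi_K$, $\pi_\Gamma$, $\kappa$ from Figure~\ref{fig:basicdiagab}) and whose second component is $\varphi|_K = e$ (constant). So $\widetilde\varphi|_K$ is determined by, and determines, the homomorphism $f \circ \kappa \in \Hom(K_{\ab}, A)$; hence $K(\varphi)$ is in natural bijection with $\Kab(\varphi)$. Moreover $\im \widetilde\varphi \cap A = \widetilde\varphi(K)$ (the observation opening the proof of Proposition~\ref{prop:phiepi}) translates to: $\widetilde\varphi(K) = A$ if and only if $(f \circ \kappa)(K_{\ab}) = A$, because the first component of $\widetilde\varphi|_K$ has image $(f\circ\kappa)(K_{\ab})$ while the second is trivial. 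Therefore this bijection restricts to a bijection between $E(\varphi)$ and $\Eab(\varphi)$; in particular $E(\varphi) \neq \emptyset \iff \Eab(\varphi) \neq \emptyset$.

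Finally I would assemble the equivalence. Combining the previous paragraph with Corollary~\ref{cor:abstractepi}: there exists an epimorphism $\Gamma \longrightarrow \Lambda = A \times F$ if and only if there exists an epimorphism $\varphi \colon \Gamma \longrightarrow F$ with $E(\varphi) \neq \emptyset$, if and only if there exists an epimorphism $\varphi$ with $\Eab(\varphi) \neq \emptyset$. One technical point worth a sentence: when $A = \Z^d$ is free (the case of interest), or more generally for any Abelian $A$, ``$f(K_{\ab}) = A$'' together with surjectivity of $\varphi$ does force $\widetilde\varphi$ to be onto, but we do not even need to re-prove this — it is exactly Proposition~\ref{prop:phiepi} applied to the split exact sequence $1 \to A \to A\times F \to F \to 1$. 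I do not expect a serious obstacle here; the only mild care needed is bookkeeping the universal property of abelianisation (every map from $\Gamma$, resp.\ $K$, to the Abelian group $A$ factors uniquely through $\Gamma_{\ab}$, resp.\ $K_{\ab}$) and the compatibility of $\kappa$ with the abelianisation projections, which is precisely the commutativity encoded in Figure~\ref{fig:basicdiagab}.
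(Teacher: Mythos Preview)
Your proposal is correct and follows essentially the same approach as the paper: it constructs bijections $L(\varphi) \leftrightarrow \Lab(\varphi)$, $K(\varphi) \leftrightarrow \Kab(\varphi)$, $E(\varphi) \leftrightarrow \Eab(\varphi)$ via the splitting of $\Lambda = A \times F$ and the universal property of abelianisation, then invokes Corollary~\ref{cor:abstractepi}. The paper presents exactly these bijections (labelled \ding{192}, \ding{193}, \ding{194}) in a commutative diagram and concludes in the same way.
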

\begin{proof}
  We consider the following commutative diagram:
  \[ \xymatrix@C=4em@R=1.75em{%
    L(\varphi)
    \ar@{<-}[r]^-{\text{\ding{192}}}
    \ar[d]_-{\args|_K}
    & \Lab(\varphi)
    \ar[d]^-{\args \circ \kappa}
    \\
    K(\varphi)
    \ar@{<-}[r]^-{\text{\ding{193}}}
    & \Kab(\varphi)
    \\
    E(\varphi)
    \ar@{<-}[r]_-{\text{\ding{194}}}
    \ar@{^{(}->}[u]
    & \Eab(\varphi)
    \ar@{^{(}->}[u]
    }
  \]
  Let $\pi_\Gamma \colon \Gamma \longrightarrow \Gamma_{\ab}$
  and $\pi_K \colon \Gamma \longrightarrow K_{\ab}$ denote the
  canonical projections. Because the target group splits as
  a product~$A \times F$, homomorphisms to the target
  split into pairs of homomorphisms to~$A$ and~$F$, respectively.
  Thus, we define
  \begin{align*}
    \text{\ding{192}} \colon \Lab(\varphi) & \longrightarrow L(\varphi)
    \\
    f & \longmapsto (f \circ \pi_\Gamma, \varphi)
    \\
    \text{\ding{193}} \colon \Kab(\varphi) & \longrightarrow K(\varphi)
    \\
    f & \longmapsto (f \circ \pi_K,e)
    \\
    \text{\ding{194}} \colon \Eab(\varphi) &\longrightarrow E(\varphi)
    \\
    f & \longmapsto (f \circ \pi_K,e).
  \end{align*}
  Then the above diagram is commutative and the horizontal maps are
  bijections (by definition of the various homomorphism
  sets). Therefore, the claim follows from the corresponding statement
  on the left column (Corollary~\ref{cor:abstractepi}).
\end{proof}

%%%%%%%%%%%%%%%%%%%%%%%%%%%%%%%%%%%%%%%%%%%%%%%%%%%%%%%%%%%%%%%%%
\section{Solving the problems in linear algebra}\label{sec:linalg}

In view of Corollary~\ref{cor:concreteepi}, Proposition~\ref{prop:Kc}
and Remark~\ref{rem:computeK}, we are interested in solving the
following problem:

\begin{question}[column-generation problem]
  Let $d, N \in \N$. Does there exist an algorithm that  
  given a finite subset~$X \subset M_{d \times N}(\Z)$, and
  a~$b \in M_{d \times N}(\Z)$ decides whether there exists
  an element in~$\Span_\Z X + b$ whose columns generate~$\Z^d$\;?
\end{question}

In general, such algorithms do \emph{not} exist~\cite{remeslennikov}.
However, as we will see, special cases of the column-generation
problem are solvable.

%%%%%%%%%%%%%
\subsection{The one-dimensional case}

For the treatment of virtually cyclic target groups, we will use the
following solution of the one-dimensional column-generation problem:

\begin{prop}\label{prop:cgvZ}
  Let $N \in \N$. Then there exists an algorithm that
  given a finite subset~$X \subset \Z^N$ and~$b \in \Z^N$
  decides whether there exists an element in~$\Span_\Z X + b$
  whose entries generate~$\Z$.
\end{prop}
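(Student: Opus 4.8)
The plan is to put the data into a normal form, reduce ``the entries generate $\Z$'' to the problem of simultaneously avoiding finitely many primes, and then let the Chinese Remainder Theorem do most of the work, with one low-rank corner case settled by a direct finite check. Concretely, I would view $X$ as the set of columns of an integer matrix and apply the Smith normal form algorithm~\cite[Algorithm~2.4.14]{cohen} to obtain a $\Z$-basis $f_1,\dots,f_N$ of $\Z^N$ together with positive integers $d_1\mid d_2\mid\dots\mid d_r$, where $r=\rk\Span_\Z X$, such that $\Span_\Z X=\Z d_1 f_1\oplus\dots\oplus\Z d_r f_r$. Writing the offset as $b=\sum_{i=1}^N b_i f_i$, the coset $\Span_\Z X+b$ is exactly the set of vectors $\sum_{i\le r}(b_i+d_i k_i)f_i+\sum_{i>r}b_i f_i$ with $k_1,\dots,k_r\in\Z$. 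Since a change of $\Z$-basis of $\Z^N$ is given by a matrix in $\mathrm{GL}_N(\Z)$ and such a matrix preserves the gcd of the entries of an integer vector, a vector of $\Z^N$ has entries generating $\Z$ if and only if the gcd of its coordinates with respect to $f_1,\dots,f_N$ equals $1$. Hence, setting $c:=\gcd(b_{r+1},\dots,b_N)$ (and $c:=0$ if $r=N$), the problem becomes the concrete one of deciding whether there exist $a_1\in b_1+d_1\Z,\ \dots,\ a_r\in b_r+d_r\Z$ with $\gcd(a_1,\dots,a_r,c)=1$.

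Next I would analyse this prime by prime. For a fixed choice $(a_i)_i$, a prime $p$ is an obstruction precisely when $p$ divides each of $a_1,\dots,a_r,c$. Using $d_1\mid d_i$ for all $i$, one checks the following: if $c\ge 1$ and $p\nmid c$, then $p$ is never an obstruction; and if $p\mid c$, or $c=0$, then some choice of the $a_i$ avoids $p$ if and only if $p\nmid\gcd(d_1,b_1,\dots,b_r)$. Consequently, a prime that is an obstruction for \emph{every} choice exists if and only if $\gcd(c,d_1,b_1,\dots,b_r)>1$ (with the usual convention that a zero entry is dropped from a gcd), and this is a decidable condition. When this gcd equals $1$ and $c\ge 1$, only the finitely many primes dividing $c$ require attention, and a single choice avoiding all of them simultaneously is then produced by the Chinese Remainder Theorem, the moduli being the $d_i$ together with the primes dividing $c$, arranged so as to be coprime where needed. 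The sub-cases $c=1$ and $r=0$ are immediate (for $r=0$ one simply tests whether $\gcd(b_1,\dots,b_N)=1$).

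It remains to treat $c=0$, i.e.\ the situation in which $b$ and $\Span_\Z X$ both lie in the rank-$r$ direct summand of $\Z^N$ spanned by $f_1,\dots,f_r$; equivalently, one asks whether a coset of a full-rank sublattice of $\Z^r$ contains a primitive vector. For $r\ge 2$ I would vary only a single coordinate: fixing $a_i:=b_i$ for $i\ge 2$ reduces the problem to finding $a_1\in b_1+d_1\Z$ with $\gcd\bigl(a_1,\gcd(b_2,\dots,b_r)\bigr)=1$, which is handled by the criterion above and whose gcd condition simplifies to $\gcd(d_1,b_1,\dots,b_r)=1$; if instead $b_2=\dots=b_r=0$, one uses $a_2:=d_2$ in place of this and is led to the same condition. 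The one genuinely exceptional case is $r=1$ together with $c=0$, in which the only candidate vectors are $\pm f_1$: here the answer is affirmative if and only if $d_1\mid b_1-1$ or $d_1\mid b_1+1$, which must be tested directly, because $\gcd(d_1,b_1)=1$ is then necessary but not sufficient (e.g.\ $d_1=5$, $b_1=2$).

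I expect the main obstacle to be bookkeeping rather than a single hard step: the ``avoid each prime, then patch by the Chinese Remainder Theorem'' strategy is clean, but one has to verify carefully that primes avoidable one at a time can really be avoided all at once (choosing moduli coprime in the right places), and one must be sure not to sweep the degenerate situations — an empty or rank-deficient $X$, an all-zero tail $b_{r+1}=\dots=b_N=0$, and above all $r=1$ with $c=0$ — into the generic gcd criterion.
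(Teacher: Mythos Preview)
Your proposal is correct and uses the same essential toolkit as the paper: Smith normal form to diagonalise the span, reduction to a gcd condition on the coordinates, and the Chinese Remainder Theorem to produce a witness when the gcd condition is met. The organisation differs somewhat. You separate out the rank~$r$ of~$\Span_\Z X$, record the fixed tail via $c=\gcd(b_{r+1},\dots,b_N)$, and obtain the uniform criterion $\gcd(c,d_1,b_1,\dots,b_r)=1$, with the single genuinely exceptional case $r=1$, $c=0$ handled by the direct test $d_1\mid b_1\pm 1$. The paper instead keeps all $N$ diagonal coefficients (allowing zeros) and runs an explicit case split on $b$ and~$\alpha_1$; the CRT step is packaged as a separate lemma (Lemma~\ref{lem:d=1gcd}) that, in your language, is precisely the $r=1$, $c\ge 1$ case. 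Your formulation is a bit cleaner conceptually; the paper's is closer to what one would actually implement. One small point worth tightening when you write it up: make the CRT patching explicit by noting that it suffices to vary only $a_1$ (fixing $a_i=b_i$ for $i\ge 2$), since then the moduli in play are $d_1$ together with the primes dividing $\gcd(b_2,\dots,b_r,c)$ but not $d_1$, and these are visibly pairwise coprime.
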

\begin{proof}
  Let $X \subset \Z^N$ and let $b \in \Z^N$.  In view of the Smith
  normal form algorithm~\cite[Algorithm~2.4.14]{cohen}, we may assume
  without loss of generality that
  \[ X = \{ \alpha_1 \cdot e_1, \dots, \alpha_N \cdot e_N
         \},
  \]
  where $(e_1, \dots, e_N)$ is the standard basis of~$\Z^N$ and
  $\alpha_1,\dots, \alpha_N \in \Z$ satisfy
  \[ \alpha_1 \mid \alpha_2, \quad
     \alpha_2 \mid \alpha_3,\quad \dots,\quad
     \alpha_{N-1} \mid \alpha_N
  \]
  (note that these coefficients also can be zero).    
  % reduction to this case: via existence of a hom \Z^n -> \Z
  % such that surjective on the affine subspace (and this
  % property is preserved by autos of \Z^N
  Moreover, we write~$A := \Span_\Z X + b$
  and we denote the coefficients of~$b$ by~$b_1, \dots, b_N$.
  
  The original decision problem is then equivalent to deciding
  whether there exist~$x_1,\dots, x_N \in \Z$ with
  \[ \gcd( \alpha_1 \cdot x_1 + b_1, \dots, \alpha_N \cdot x_N + b_N) = 1.
  \]
  This problem can be solved as follows:
  \begin{itemize}
  \item If $\gcd(b_1,\dots, b_N) = 1$, then the answer is \emph{yes}
    (we can take~$x_1 = \dots = x_N = 0$).
  \item If $b = 0$, then:
    \begin{itemize}
    \item If $\alpha_1 = \pm 1$, then the answer is \emph{yes}
      (we can take~$x_1 =1$, $x_2 = \dots = x_N = 0$).
    \item If $\alpha_1 = 0$, then $\alpha_2 = \dots = \alpha_N = 0$
      and so~$A = \{0\} + 0 = \{0\}$. Hence, the answer is \emph{no}.
    \item If $\alpha_1 \not\in \{-1,0,1\}$, then $\gcd (\alpha_1,
      \dots, \alpha_N) > 1$, and so the answer is \emph{no}.
    \end{itemize}    
  \item If $b \neq 0$ and $c := \gcd(b_1, \dots, b_N) > 1$, then:
    \begin{itemize}
    \item If $\gcd(\alpha_1, c) > 1$, then the answer is \emph{no}
      because then we have also $\gcd(\alpha_1, \dots, \alpha_N , c)
      >1$ and so $\gcd(\alpha_1 \cdot x_1 + b_1, \dots, \alpha_N \cdot
      x_N + b_N) > 1$ for all~$x_1, \dots, x_N \in \Z$.
    \item If $\gcd(\alpha_1, c) = 1$, then:
      \begin{itemize}
      \item If there exists a~$j \in \{2,\dots, N\}$
        with~$b_j \neq 0$, then the answer is \emph{yes}:
        By Lemma~\ref{lem:d=1gcd} below, there exists an~$x \in \Z$
        such that
        \[ \gcd( x \cdot \alpha_1 + b_1, b_2, \dots, b_N) = 1.
        \]
        Hence, we can take~$x_1 = x, x_2 = \dots = x_N = 0$.
%      \item If $N = 1$, then the answer is 
%        \emph{yes} if and only if $c$ is congruent to~$1$ modulo~$\alpha_1$.
      \item If $b_j = 0$ for all~$j \in \{2,\dots, N\}$, then:
        \begin{itemize}
        \item If $N = 1$ or $\alpha_2 = 0$ (whence~$\alpha_2 = \dots = \alpha_N = 0$),
          then the answer is \emph{yes} if and only if $c$ is congruent to~$1$
          modulo~$\alpha_1$.
        \item If $\alpha_2 \neq 0$, then the answer is \emph{yes}:
          We can apply Lemma~\ref{lem:d=1gcd} below
          to~$\alpha_1, b_1, \alpha_2$ to find an~$x \in \Z$
          with
          \[ \gcd (x \cdot \alpha_1 + b _1, \alpha_2) =1.\]
          Hence, we can take~$x_1 = x$, $x_2 = 1$, $x_3 = \dots = x_N = 0$.
          \qedhere
        \end{itemize}
      \end{itemize}
    \end{itemize}
  \end{itemize}
\end{proof}

\begin{lem}\label{lem:d=1gcd}
  Let $N \in \N_{\geq 2}$, let $\alpha_1, b_1, \dots, b_N \in \Z$
  such that there exists a~$j \in \{2,\dots,N\}$ with~$b_j \neq 0$
  and 
  \begin{align*}
%    c := \gcd(b_1, \dots, b_N) & > 1
%    \\
    \gcd(\alpha_1, b_1, \dots, b_N) & = 1.
  \end{align*}
  Then there exists an~$x \in \Z$ with
  \[ \gcd(x \cdot \alpha_1 + b_1 , b_2, \dots, b_N) = 1.
  \]
\end{lem}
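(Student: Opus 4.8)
The plan is to reduce the statement to a claim about two integers and then resolve it prime by prime using the Chinese Remainder Theorem. First I would set $g := \gcd(b_2,\dots,b_N)$; since some $b_j$ with $j\in\{2,\dots,N\}$ is nonzero, $g$ is a well-defined positive integer. For every $x\in\Z$ we have
\[ \gcd(x\cdot\alpha_1 + b_1, b_2, \dots, b_N) = \gcd(x\cdot\alpha_1 + b_1, g), \]
and the hypothesis rewrites as $\gcd(\alpha_1, b_1, g) = 1$. So it suffices to exhibit an $x\in\Z$ with $\gcd(x\cdot\alpha_1 + b_1, g) = 1$, i.e. with $p \nmid x\cdot\alpha_1+b_1$ for every prime $p$ dividing $g$.

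Next I would treat each such prime $p$ separately. If $p \mid \alpha_1$, then $p \nmid b_1$: otherwise $p$ would divide $\gcd(\alpha_1, b_1, g)=1$. Hence in this case $x\cdot\alpha_1 + b_1 \equiv b_1 \not\equiv 0 \pmod p$ for \emph{every} $x$, so such primes impose no condition on $x$. If instead $p \nmid \alpha_1$, then $\alpha_1$ is a unit modulo $p$, so the map $\bar x \mapsto \bar x\cdot\bar\alpha_1 + \bar b_1$ is a bijection of $\Z/p\Z$; since $p\geq 2$, at most one residue class is a root, hence there is a residue $c_p$ with $c_p\cdot\alpha_1 + b_1 \not\equiv 0 \pmod p$.

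Finally I would apply the Chinese Remainder Theorem: among the finitely many primes dividing $g$, pick $x\in\Z$ with $x\equiv c_p \pmod p$ for each prime $p\mid g$ with $p\nmid\alpha_1$ (no constraint from the primes dividing $\alpha_1$). Then $p \nmid x\cdot\alpha_1+b_1$ for every prime $p\mid g$, so $\gcd(x\cdot\alpha_1 + b_1, g) = 1$ and therefore $\gcd(x\cdot\alpha_1 + b_1, b_2,\dots,b_N) = 1$, as required. The only genuinely load-bearing step — and the one place the full hypothesis $\gcd(\alpha_1, b_1, \dots, b_N)=1$ is used, rather than the weaker $\gcd(\alpha_1,b_2,\dots,b_N)=1$ — is the observation that a prime dividing both $\alpha_1$ and $g$ cannot also divide $b_1$; the rest is routine. (The degenerate cases $g=1$ and $\alpha_1=0$ are absorbed automatically: in the former there are no primes to control, in the latter every prime $p\mid g$ falls into the first case.)
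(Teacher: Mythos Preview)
Your proof is correct and follows essentially the same strategy as the paper: isolate the finitely many primes dividing~$g=\gcd(b_2,\dots,b_N)$, observe that such a prime cannot divide both~$\alpha_1$ and~$b_1$ by the hypothesis, and then use the Chinese Remainder Theorem to produce the desired~$x$. The only cosmetic difference is that you argue directly and constructively, whereas the paper phrases the same prime-by-prime analysis as a proof by contradiction (assuming every~$c_x>1$ and deriving that~$\Z$ is covered by finitely many proper residue classes).
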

\begin{proof}
  \emph{Assume} for a contradiction that for all~$x \in \Z$ we have
  \begin{align}\label{eq:cxassumption} 
    c_x := \gcd( x\cdot \alpha_1 + b_1, b_2, \dots, b_N) > 1.
  \end{align}
  Let $P \subset \N$ be the set of primes that divide~$\gcd(b_2, \dots, b_N)$
  (because~$b_j \neq 0$, this set is finite). Let $p \in P$.
  Then we claim that 
  there exists a~$d_p \in \Z$ with
  \[ D_p := \{ x \in \Z \mid \text{$p$ divides~$c_x$} \} \subset d_p + p \cdot \Z.
  \]
  Let us prove this claim:
  Let $x, y \in D_p$. It then suffices to show that $p$ divides~$x - y$
  (as then all elements of~$D_p$ share the same remainder modulo~$p$). 
  On the one hand, we have $p \mid x \cdot \alpha_1 + b_1$ and $p \mid
  y \cdot \alpha_1 + b_1$, and so
  \[ p \bigm| (x-y) \cdot \alpha_1.
  \]

  On the other hand, $p$ does \emph{not} divide~$\alpha_1$: Because of
  $p \mid c_x$, we have that $p \mid \gcd(b_2, \dots, b_N)$. If $p
  \mid \alpha_1$, then $p \mid b_1$ (because $p \mid x \cdot \alpha_1
  + b_1$) and so $p \mid \gcd(\alpha_1, b_1, \dots, b_N)$, which
  contradicts the assumptions in the proposition. therefore, $p$ does
  not divide~$\alpha_1$.

  Because $p$ is prime, it follows that $p \mid (x-y)$.
  This proves the claim.

  By assumption~\eqref{eq:cxassumption}, the construction of the
  sets~$D_p$, and the fact that each~$c_x$ has prime factors in~$P$,
  we have
  \[ \Z = \bigcup_{p \in P} D_p
        \subset \bigcup_{p \in P} (d_p + p \cdot \Z).
  \]
  Because $P$ is finite, the Chinese remainder theorem yields an
  element~$z \in \Z$ such that for all~$p \in P$ we have
  \[ z \not\equiv d_p \mod p.
  \]
  But this contradicts~$\Z = \bigcup_{p \in P} D_p$. Hence, we can
  conclude that there must be an~$x \in \Z$ with~$c_x = 1$.
\end{proof}

%%%%%%%%%%%
\subsection{The symmetric homogeneous case}

Let us now turn to the situation of target groups that decompose as a
product~$A \times F$ of a finitely generated free Abelian group~$A$
and a finite group~$F$. In this case, in the situation of
Proposition~\ref{prop:Kc}, the subset~$\KK(\varphi) \subset \map(T,A)$
is linear subspace (and not only an affine subspace) and the
equation~\eqref{eq:lineq} is invariant under automorphisms of~$A$
(because all the conjugations~$C_{\dots}$ are just the identity map).
Therefore, we end up with a very special version of the
column-generation problem (which turns out to be solvable).

However, instead of using the notation of the column-generation
problem (which is rather confusing in this case), we prefer to use an
alternative description of~$\KK(\varphi)$ and $\EE(\varphi)$, which is
more convenient (see
Proposition~\ref{prop:KEprod}). Proposition~\ref{prop:cgprod} will
then enter in the proof of Theorem~\ref{thm:prod}.

Every finitely generated $\Z$-module is finitely presented (because
$\Z$ is Noetherian) and can thus be described by a matrix over~$\Z$.
If $m,n \in \N$ and~$A \in M_{n \times m}(\Z)$, then we write
\[ M(A) := \Z^n/ \{ A \cdot x \mid x \in \Z^m \}
\]
for the finitely generated $\Z$-module presented by~$A$. 

\begin{prop}\label{prop:cgprod}
  There exists an algorithm that, given the input
  \begin{itemize}
  \item matrices~$A_1 \in M_{n_1 \times m_1}(\Z)$ and $A_2 \in M_{n_2
    \times m_2}(\Z)$,
  \item a homomorphism~$\kappa \colon M(A_1) \longrightarrow M(A_2)$
    \textup{(}given as an $n_2 \times n_1$-matrix\textup{)},
  \item a~$d\in \Z$,
  \end{itemize}
  decides whether there exists an epimorphism~$\psi \colon M(A_1)
  \longrightarrow \Z^d$ such that there is a homomorphism~$\widetilde
  \psi \colon M(A_2) \longrightarrow \Z^d$ with~$\widetilde \psi \circ
  \kappa = \psi$.
  \[ \xymatrix@C=4em@R=1.75em{%
    M(A_1) \ar@{->>}[r]^-{\psi} \ar[d]_-{\kappa}
    & \Z^d
    \\
    M(A_2) \ar@{-->}[ur]_-{\widetilde \psi}
    &
    }
  \]
\end{prop}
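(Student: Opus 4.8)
The plan is to eliminate the auxiliary module $M(A_2)$ and the homomorphism $\widetilde\psi$ in favour of a single sublattice of a free $\Z$-module, and then to read off solvability from its invariant factors. Throughout we may assume $d \ge 1$ (for $d \le 0$ the answer is trivially ``yes'').

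\emph{Step 1: reduction to a single lattice.} Since $\Z^d$ is torsion-free, every homomorphism from a finitely generated $\Z$-module into $\Z^d$ annihilates the torsion submodule. Using the Smith normal form of $A_2$~\cite[Algorithm~2.4.14]{cohen} we compute $r_2 \in \N$ together with the canonical surjection $q \colon M(A_2) \longrightarrow M(A_2)/\mathrm{tors} \cong \Z^{r_2}$, so that precomposition with $q$ identifies $\Hom(M(A_2),\Z^d)$ with $\Hom(\Z^{r_2},\Z^d) = M_{d\times r_2}(\Z)$. If $\widetilde\psi$ corresponds to $B \in M_{d\times r_2}(\Z)$ under this identification, then $\im(\widetilde\psi\circ\kappa) = \widetilde\psi(\im\kappa) = B\bigl(q(\im\kappa)\bigr)$. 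Writing $J := q(\im\kappa) \subset \Z^{r_2}$ --- a finitely generated subgroup computable from $\kappa$, $A_1$, $A_2$ and the Smith data --- the existence of a pair $(\psi,\widetilde\psi)$ as in the statement becomes equivalent to the existence of some $B \in M_{d\times r_2}(\Z)$ with $B(J) = \Z^d$: given such a $B$, the homomorphism $\psi := (B\circ q)\circ\kappa$ is an epimorphism onto $\Z^d$ factoring through $\kappa$; conversely, every admissible $\psi$ arises this way, since its lift $\widetilde\psi$ equals $B\circ q$ for a unique $B$ and then $B(J)=\im\psi=\Z^d$.

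\emph{Step 2: the key characterisation.} Put $s := \rk J$ and let $c_1 \mid c_2 \mid \dots \mid c_s$ be the positive invariant factors of the inclusion $J \hookrightarrow \Z^{r_2}$; concretely, pick a basis $(f_1,\dots,f_{r_2})$ of $\Z^{r_2}$ for which $(c_1 f_1,\dots,c_s f_s)$ is a basis of $J$ (another Smith normal form computation). I claim that a $B$ as in Step~1 exists if and only if $d \le s$ and $c_1 = \dots = c_d = 1$, equivalently $d \le s$ and $c_d = 1$. If $c_1 = \dots = c_d = 1$, then $f_1,\dots,f_d \in J$ and the projection $B\colon \Z^{r_2}\longrightarrow\Z^d$ determined by $B(f_i)=e_i$ for $i\le d$ and $B(f_i)=0$ for $i>d$ satisfies $B(J)=\Z^d$. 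Conversely, $B(J)=\Z^d$ forces $d = \rk B(J) \le \rk J = s$; and if some prime $p$ divided $c_d$, then $p \mid c_i$ for every $i \ge d$, so $B(J)$ would be generated by $c_1 B(f_1),\dots,c_{d-1}B(f_{d-1})$ together with elements of $p\,\Z^d$, and $d-1$ elements cannot generate $\Z^d/p\,\Z^d$ --- contradicting $B(J) = \Z^d$.

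\emph{Step 3: decidability, and the main obstacle.} Both $s$ and the invariant factors $c_1 \mid \dots \mid c_s$ of $J \hookrightarrow \Z^{r_2}$ are output by the Smith normal form algorithm, so the algorithm simply forms $J$, computes these data, and answers ``yes'' precisely when $d \le s$ and $c_d = 1$. I expect Step~2 to be the crux: the observation that homomorphisms into $\Z^d$ must factor through the torsion-free quotient collapses the quantifier over $\widetilde\psi$ to the purely lattice-theoretic question of whether $J$ can be mapped onto $\Z^d$, and the content is that this is governed entirely by the first $d$ invariant factors of $J$, with the negative case witnessed by reduction modulo a prime dividing $c_d$. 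Step~1 is routine bookkeeping with presentation matrices, and Step~3 only invokes results already cited.
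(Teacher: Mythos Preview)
Your argument is correct and follows essentially the same approach as the paper: pass to the torsion-free quotient of~$M(A_2)$, replace~$\kappa$ by the inclusion of a sublattice in Smith normal form, and decide the question by counting how many of the invariant factors equal~$1$, using reduction modulo a prime for the negative case. The only cosmetic difference is that the paper reduces both~$M(A_1)$ and~$M(A_2)$ to free modules before taking the image, whereas you take the image first and thereby obtain a free submodule directly; the resulting criterion and its justification are the same.
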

\begin{proof}
  Using the Smith normal form of~$A_1$ and $A_2$, we can compute the
  maximal free quotients of~$M(A_1)$ and $M(A_2)$ and the
  corresponding contribution of~$\kappa$. Because the target
  group~$\Z^d$ is free Abelian, we can therefore assume without loss
  of generality that $M(A_1)$ and $M(A_2)$ are free. Moreover, the
  image of~$\kappa$ can be computed and so we may assume that $\kappa$
  is the inclusion of a submodule (of which we know a basis in Smith
  normal form).

  Hence, we reduced the original problem to the following decision
  problem: Given the input
  \begin{itemize}
  \item $d, n \in \N$,
  \item $\alpha_1, \dots, \alpha_n \in \Z$ with $\alpha_1 \mid
    \alpha_2, \dots, \alpha_{n-1} \mid \alpha_n$,
  \end{itemize}
  decide whether there exists an epimorphism~$\psi \colon N  
  \longrightarrow \Z^d$ that admits an extension to a
  homomorphism~$\Z^n \longrightarrow \Z^d$, where
  \[ N := \Span_\Z\{\alpha_1\cdot e_1, \dots, \alpha_n \cdot e_n\} \subset \Z^n
  \]

  This problem can be solved as follows: Let $r \in \{0,\dots,n\}$ be
  the minimal index for which~$\alpha_{r+1} \not \in \{1,-1\}$ (where
  we set $r := n$ if $\alpha_n \in \{1,-1\}$). We then distinguish the
  following cases:
  \begin{itemize}
  \item If $r \geq d$, then the answer is \emph{yes}: clearly, the
    projection
    \begin{align*}
      \psi \colon \Z^n & \longrightarrow \Z^d
      \\
      x & \longmapsto (x_1,\dots, x_d)
    \end{align*}
    onto the first $d$ coordinates restricts to an epimorphism~$N
    \longrightarrow \Z^d$.
  \item If $r < d$, then the answer is \emph{no}:
    \begin{itemize}
    \item If $n < d$ or $\alpha_{r+1} = 0$, then the rank of~$N$ is
      smaller than~$d$ (and so there does not exist any epimorphism~$N
      \longrightarrow \Z^d$).
    \item If $n \geq d$ and $\alpha_{r+1} \neq 0$, then: Let
      $\widetilde \psi \colon \Z^n \longrightarrow \Z^d$ be a
      homomorphism and let $\psi := \widetilde \psi|_N$. We will now
      show that $\psi$ is \emph{not} surjective:

      Let $N' := \psi(N)$ and let $p$ be a prime factor of~$\alpha_{r+1}$.
      Then $N = \Z^r \oplus p \cdot R$ for some submodule~$R \subset \Z^{n-r}$
      and hence
      \[ N' \otimes_{\Z} \Z/p
      = \bigl( \psi(\Z^r) + p \cdot \psi(R)\bigr) \otimes_\Z \Z/p
      = \psi(\Z^r) \otimes_\Z \Z/p
      \]
      has $\Z/p$-dimension at most~$r$, which in turn is smaller than~$d$.
      In particular, $\psi$ cannot be surjective.
      \qedhere
    \end{itemize}
  \end{itemize}
\end{proof}

%%%%%%%%%%%%%%%%%%%%%%%%%%%%%%%%%%%%%%%%%%%%%%%%%%%%%%%%%%%%%%%%%%
\section{Solvability}\label{sec:solv}

Using the tools of Section~\ref{sec:prep} and
Section~\ref{sec:linalg}, we will now establish solvability of the
epimorphism problem for targets that are products of Abelian groups
with finite groups or targets that are virtually cyclic
(Theorem~\ref{thm:prod} and Theorem~\ref{thm:vZ}).

%%%%%%%%%%%%%%%%%
\subsection{Preparation}

For the sake of completeness, we recall some basics from algorithmic
group theory.

% choice of fin gen set does not make a difference

\begin{prop}\label{prop:symmgenrel}
  There exists an algorithm that, given the input
  \begin{itemize}
  \item a finite presentation~$\genrel SR$,
  \end{itemize}
  determines a symmetric finite presentation~$\genrel {S'} {R'}$
  with~$S \subset S'$ such that the inclusion~$S \longrightarrow S'$
  induces an isomorphism~$\bigl| \genrel SR\bigr| \longrightarrow
  \bigl|\genrel{S'}{R'}\bigr|$.
\end{prop}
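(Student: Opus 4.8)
The plan is to take an arbitrary finite presentation~$\genrel SR$ and enlarge it, one generator at a time, so that each generator acquires a formal inverse among the generators and every relation becomes a positive word. First I would replace each relator~$r \in R$ by a positive word: writing~$r$ as a reduced word in~$S \cup S^{-1}$, each occurrence of~$s^{-1}$ (for~$s \in S$) can be rewritten once we have introduced a new generator~$\bar s$ together with the relation~$s\bar s$ (a positive word of length~$2$). Concretely, set~$S' := S \sqcup \{\bar s \mid s \in S\}$ and let~$R'$ consist of (i) the relations~$s\bar s$ for all~$s \in S$, and (ii) for each~$r \in R$, the word~$\widehat r$ obtained from~$r$ by replacing every letter~$s^{-1}$ with~$\bar s$ and leaving every letter~$s$ unchanged. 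Both items are manifestly positive words in~$S'$, and for each~$t \in S'$ we exhibit a partner: if~$t = s \in S$, then~$s\bar s \in R'$; if~$t = \bar s$, then again~$s\bar s \in R'$ gives the required relation (of the form~$t't$). Hence~$\genrel{S'}{R'}$ is symmetric, and it is finite whenever~$\genrel SR$ is. This construction is plainly algorithmic: one simply scans each relator and emits the rewritten words together with the finitely many relations~$s\bar s$.

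It remains to check that the inclusion~$S \hookrightarrow S'$ induces an isomorphism~$\bigl|\genrel SR\bigr| \longrightarrow \bigl|\genrel{S'}{R'}\bigr|$. For this I would use the universal property of presentations in both directions. On the one hand, the relations~$s\bar s$ force~$\bar s$ to equal~$s^{-1}$ in~$\bigl|\genrel{S'}{R'}\bigr|$, so in that group~$\widehat r$ and~$r$ represent the same element; thus the assignment~$s \mapsto s$, $\bar s \mapsto s^{-1}$ defines a homomorphism~$\Free(S') \longrightarrow \bigl|\genrel SR\bigr|$ killing every element of~$R'$, hence a homomorphism~$\beta \colon \bigl|\genrel{S'}{R'}\bigr| \longrightarrow \bigl|\genrel SR\bigr|$. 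On the other hand, the inclusion~$S \subset S'$ gives~$\Free(S) \hookrightarrow \Free(S')$; since~$r$ and~$\widehat r$ differ only by replacing~$s^{-1}$ by~$\bar s$ and~$\bar s$ is identified with~$s^{-1}$ modulo the relations~$s\bar s$, the image of each~$r \in R$ lies in~$\langle R'\rangle^{\triangleleft}_{\Free(S')}$, so we obtain~$\alpha \colon \bigl|\genrel SR\bigr| \longrightarrow \bigl|\genrel{S'}{R'}\bigr|$, which is the map induced by~$S \hookrightarrow S'$. Finally~$\beta \circ \alpha = \id$ is immediate on the generators~$s \in S$, and~$\alpha \circ \beta = \id$ holds on the generators~$s \in S$ trivially and on the generators~$\bar s$ because~$\beta(\bar s) = s^{-1}$ and~$\alpha(s^{-1}) = s^{-1} = \bar s$ in~$\bigl|\genrel{S'}{R'}\bigr|$ (using again the relation~$s\bar s$). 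Hence~$\alpha$ and~$\beta$ are mutually inverse isomorphisms.

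There is no real obstacle here; the only point requiring a modicum of care is the verification that~$\alpha \circ \beta$ is the identity on the new generators~$\bar s$, which is exactly where the added relations~$s\bar s$ are used. Everything else is a routine application of von Dyck's theorem, and the algorithm — scan the relators, introduce one barred generator per original generator, emit the length-two relations and the rewritten relators — is obviously effective and terminates on any finite input.
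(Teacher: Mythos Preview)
Your proposal is correct and takes essentially the same approach as the paper: the paper sets~$S' := S \times \{-1,1\}$ with relations~$(s,1)(s,-1)$ and the rewritten relators, which after identifying~$(s,1)$ with~$s$ and~$(s,-1)$ with your~$\bar s$ is exactly your construction. You supply more detail than the paper does on the isomorphism verification via the mutually inverse maps~$\alpha,\beta$, but the underlying idea is identical.
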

\begin{proof}
  For instance, we can take
  \begin{align*}
    S' := & \; S \times \{-1,1\}
    \\
    R' := & \; \bigl\{ (s,1) (s,-1) \bigm| s \in S \bigr\}
    \\
    \cup &\; \bigl\{ (s_1,\varepsilon_1) \cdots (s_m,\varepsilon_m)
    \bigm| m \in \N,\ s_1,\dots, s_m \in S,\ \varepsilon_1,\dots, \varepsilon_m \in \{-1,1\},
    \\
    &\; \phantom{\bigl\{ (s_1,\varepsilon_1) \cdots (s_m,\varepsilon_m)
    \bigm|} 
    \ s_1^{\varepsilon_1} \cdots s_m^{\varepsilon_m} \in R 
    \bigr\}.
  \end{align*}
  Strictly speaking, in this construction, $S$ is not a subset of~$S'$,
  but this can be fixed by renaming.
\end{proof}

\begin{prop}\label{prop:finepilist}
  There exists an algorithm that, given the input
  \begin{itemize}
  \item a finite presentation~$\genrel SR$,
  \item a finite group~$F$ \textup{(}as set of elements and its
    multiplication table\textup{)},
  \end{itemize}
  determines the set of all epimorphisms~$\bigl|\genrel SR\bigr|
  \longrightarrow F$.
\end{prop}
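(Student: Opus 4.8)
The plan is to enumerate all homomorphisms $\bigl|\genrel SR\bigr| \longrightarrow F$ by brute force and then test each for surjectivity. A homomorphism out of $\bigl|\genrel SR\bigr| = \Free(S)/\langle R\rangle^{\triangleleft}$ is determined by where it sends each generator in~$S$, so it corresponds to a function $S \longrightarrow F$; since $S$ and $F$ are finite, there are only $|F|^{|S|}$ such functions, and we can list all of them. For each candidate function~$f \colon S \longrightarrow F$, we first check that it actually defines a homomorphism on the quotient: extend $f$ to the unique homomorphism $\Free(f)\colon \Free(S) \longrightarrow F$ (evaluate words in $S \cup S^{-1}$ using the multiplication table of~$F$, inverting via the table), and verify that $\Free(f)(r) = e$ for every $r \in R$. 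Because $R$ is finite and each relator is a finite word, this is a finite computation. The functions passing this test are in bijection with $\Hom\bigl(\bigl|\genrel SR\bigr|, F\bigr)$, by the universal property of presentations.

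Next, for each homomorphism so obtained, decide whether it is an epimorphism. The image of $\Free(f)$ is the subgroup of~$F$ generated by $\{f(s) \mid s \in S\}$; this can be computed by starting from $\{e\}$ and repeatedly closing under multiplication by the finitely many elements $f(s)$ and their inverses until the set stabilises (this terminates since~$F$ is finite). The homomorphism is an epimorphism precisely when this generated subgroup equals~$F$, which is a finite comparison. Output the list of all $f$ for which this holds.

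I do not expect any serious obstacle here: every ingredient — enumerating functions between finite sets, evaluating words via a multiplication table, and computing the subgroup generated by a finite subset of a finite group — is elementary and manifestly effective. The only point requiring a word of justification is that the procedure correctly identifies $\Hom\bigl(\bigl|\genrel SR\bigr|,F\bigr)$ with the set of relator-respecting functions $S \longrightarrow F$, which is immediate from the definition $\bigl|\genrel SR\bigr| = \Free(S)/\langle R\rangle^{\triangleleft}_{\Free(S)}$ and the universal property of the free group: a map $S \longrightarrow F$ extends (uniquely) over the quotient if and only if it kills every element of~$R$. Everything else is bookkeeping, and efficiency is explicitly not a concern in this paper.
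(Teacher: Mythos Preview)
Your proposal is correct and matches the paper's proof essentially step for step: enumerate~$\map(S,F)$, filter for relator-respecting maps via the multiplication table, then test surjectivity by computing the subgroup generated by~$f(S)$. The only cosmetic difference is that the paper precomputes the set of all generating subsets of~$F$ (using words of length at most~$|F|$) rather than closing each~$f(S)$ under multiplication, but this is the same idea.
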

\begin{proof}
  Because $S$ is a generating set of~$\Gamma := \bigl|\genrel
  SR\bigr|$, group homomorphisms~$\Gamma \longrightarrow F$ can be
  represented by maps~$S \longrightarrow F$. We will compute the set
  of all epimorphisms~$\Gamma \longrightarrow F$ in the sense that we
  compute the subset of~$\map(S,F)$ consisting of all maps
  corresponding to epimorphisms~$\Gamma \longrightarrow F$.
  
  Let $n := |S|$ and let $S = \{s_1,\dots, s_n\}$.
  \begin{itemize}
  \item Then we can compute the finite set~$\map(S,F)$.
  \item For each~$f \in \map(S,F)$, we can check whether for
    each~$r \in R$, we have $\Free(f)(r) = e$ in~$F$ (using
    the multiplication table in~$F$). 
    Hence, we can compute the finite set
    \[ H := \bigl\{ f \in \map(S,F) \bigm| \fa{r \in R} \Free(f) = e \text{ in~$F$}\bigr\}, 
    \]
    which corresponds to the set of all homomorphisms~$\Gamma
    \longrightarrow F$ (via the universal property of generators and
    relations).
  \item
    We then compute the set of all generating sets of~$F$ as follows:
    For each subset~$T \subset F$, the set
    \[ G(T) := \bigl\{ t_1 \cdot \dots \cdot t_n
       \bigm| n \in \{0,\dots, |F|\},\ t_1, \dots, t_n \in T \cup T^{-1} \bigr\} \subset F
    \]
    equals the subgroup of~$F$ generated by~$T$ (by the pigeon-hole
    principle, longer words in~$T \cup T^{-1}$ cannot contribute new
    elements).

    Therefore, we can compute the set of all generating sets of~$F$ as
    the (finite) set
    \[ G := \{ T \subset F \mid G(T) = F \}.
    \]
  \item
    Hence, we can compute
    \[ E := \bigl\{ f \in H \bigm| f(S) \in G \bigr\}, 
    \]
    which corresponds to the set of all epimorphisms~$\Gamma
    \longrightarrow F$.  \qedhere
  \end{itemize}
\end{proof}

\begin{prop}\label{prop:finindex}
  There exists an algorithm that, given the input
  \begin{itemize}
  \item a symmetric finite presentation~$\genrel SR$,
  \item a finite group~$F$ \textup{(}as set of elements and its
    multiplication table\textup{)},
  \item an epimorphism~$\varphi \colon \bigl|\genrel SR\bigr|
    \longrightarrow F$ \textup{(}given by the images on~$S$\textup{)},
  \end{itemize}
  determines a symmetric finite presentation of~$\ker \varphi$
  \textup{(}where the generators are specified as words
  in~$S$\textup{)}.
\end{prop}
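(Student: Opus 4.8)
The plan is to use the Reidemeister--Schreier rewriting process, which is entirely effective for finite-index subgroups once one has a Schreier transversal. First I would compute a Schreier transversal~$U \subset \Free(S)$ for~$K := \ker\varphi$ in~$\Gamma := \bigl|\genrel SR\bigr|$. This is possible because $F$ is given by its multiplication table and $\varphi$ is given on~$S$: one builds the Cayley graph of~$F$ with respect to the generating set~$\varphi(S)$, chooses a spanning tree rooted at~$e$, and lets $U$ consist of the (positive, since the presentation is symmetric) words in~$S$ labelling the tree paths from the root. The index $[\Gamma:K] = |F|$ is finite and known, so this terminates. For each coset representative $u \in U$ and each generator $s \in S$, the Schreier generator is $u s \,\overline{us}^{\,-1}$, where $\overline{w}$ denotes the representative in~$U$ of the coset $\varphi(w)$; these Schreier generators (those that are nontrivial, i.e.\ where $us$ does not already lie in the tree) form a generating set~$T$ for~$K$, each specified as a word in~$S$ as required.

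Next I would produce the relators. The Reidemeister--Schreier theorem gives that $K$ is presented by the set of Schreier generators subject to the relators obtained by rewriting $u r u^{-1}$ for each $u \in U$ and each $r \in R$ into the Schreier generators: one reads the word $u r u^{-1}$ letter by letter, tracking the current coset, and replaces each letter by the corresponding Schreier generator symbol (or the empty word when that generator is trivial). Since $R$ and $U$ are both finite, this yields a finite presentation $\genrel{T}{Q}$ of~$K$, all of whose data are computable from the multiplication table of~$F$, the map~$\varphi$, and the chosen tree. Every step here is a finite bookkeeping operation.

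Finally, the presentation $\genrel{T}{Q}$ produced by Reidemeister--Schreier need not be symmetric, so I would apply Proposition~\ref{prop:symmgenrel} to it, obtaining a symmetric finite presentation~$\genrel{T'}{Q'}$ with $T \subset T'$ and an isomorphism $\bigl|\genrel TQ\bigr| \to \bigl|\genrel{T'}{Q'}\bigr|$ induced by the inclusion. Composing, we get a symmetric finite presentation of~$\ker\varphi$, and since each element of~$T$ (hence of~$T'$, by tracing through the construction in Proposition~\ref{prop:symmgenrel}) is a word in~$S$, the output meets all requirements. The only mild subtlety --- the ``main obstacle'' --- is the bookkeeping of identifying which coset of~$K$ a given prefix of a word lies in; but this is handled cleanly by the correspondence $\Gamma/K \cong F$ via~$\varphi$, so one simply applies $\varphi$ to prefixes and looks up the result in the transversal. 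There is no genuine difficulty, only the need to assemble the standard effective Reidemeister--Schreier algorithm and then symmetrise.
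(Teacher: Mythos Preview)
Your proposal is correct and follows essentially the same approach as the paper: compute a system of coset representatives for~$\ker\varphi$ (the paper does this by brute-force enumeration of words in~$\Free(S)$ rather than via a spanning tree in the Cayley graph of~$F$, but this is an inessential implementation detail), apply the Reidemeister--Schreier rewriting process to obtain a finite presentation of the kernel, and then symmetrise via Proposition~\ref{prop:symmgenrel}. Your write-up is in fact more explicit than the paper's about the bookkeeping, and your observation that the symmetric presentation lets one work with positive words is a pleasant touch the paper does not mention.
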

\begin{proof}
  The given data allows to find a map~$\sigma \colon F \longrightarrow
  \Free(S)$ with
  \[ \varphi \circ \pi \circ \sigma = \id_F,
  \]
  where $\pi \colon \Free(S) \longrightarrow \bigl| \genrel SR \bigr|$
  denotes the canonical projection (by enumerating all elements
  in~$\Free(S)$ and computing their images in~$F$ via~$\varphi \circ
  \pi$, until a preimage is found for every element in~$F$); without
  loss of generality, we may assume that $\sigma(e) =\varepsilon$. In
  other words, $\sigma$ specifies a coset representative system
  for~$\ker \varphi$ in~$\bigl|\genrel SR\bigr|$ (expressed in terms
  of words over~$S$). We write
  \[ c:= \sigma \circ \varphi \circ \pi \colon \Free(S) \longrightarrow \Free(S)
  \]
  for the map that determines the coset representative of an element
  selected by~$\sigma$.  Then the words
  \[ \bigl\{ \sigma(f) \cdot s (c (\sigma(f) \cdot s) )^{-1} 
     \bigm| s \in S,\ f \in F \bigr\}
  \]
  describe a generating set of~$\ker \varphi$~\cite[Theorem~2.7]{mks}.
  The Reidemeister rewriting process associated with respect to this
  generating set and the map~$c$ then computes a finite presentation
  of~$\ker \varphi$~\cite[Corollary~2.7.2, Theorem~2.8]{mks}. Finally,
  we symmetrise this finite presentation via
  Proposition~\ref{prop:symmgenrel}.
\end{proof}

For virtually Abelian targets that do not decompose as a product of a
free Abelian group and a finite group, we first want to clarify what
it means that a virtually Abelian group is given as
``input''. Naively, we could just take a finite presentation~$\genrel
SR$ of which we know for some external reason that the group~$\bigl|
\genrel SR\bigr|$ is virtually Abelian. A more constructive point of
view would require to include a reason why and how the given group is
virtually Abelian, i.e., that we are given a constructive description
of this group as extension of a finitely generated free Abelian group
by a finite group. In fact, every naive description can be turned
algorithmically into a constructive description. We will explain this
now in detail:

\begin{prop}\label{prop:vAbconstructive}
  There exists an algorithm that, given the input
  \begin{itemize}
  \item a finite presentation~$\genrel SR$ of a virtually Abelian
    group
  \end{itemize}
  determines
  \begin{itemize}
  \item a finite group~$F$ \textup{(}as set of elements and its
    multiplication table\textup{)},
  \item a $d \in \N$, 
  \item a group homomorphism~$C \colon F \longrightarrow \Aut(\Z^d)$,
  \item a cocycle~$c \in \Chn^2(F; \Z^d)$ \textup{(}with respect to
    the action~$C$\textup{)}
  \end{itemize}
  such that $\bigl| \genrel SR\bigr|$ is isomorphic to the extension
  group of~$\Z^d$ by~$F$ that corresponds to the cocycle~$c$.
\end{prop}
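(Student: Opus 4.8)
The plan is to combine standard algorithms for finite presentations with the structural input provided by Proposition~\ref{prop:vAbbasics}, namely that a finitely generated virtually Abelian group $\Gamma := \bigl|\genrel SR\bigr|$ sits in a short exact sequence $1 \to \Z^d \to \Gamma \to F \to 1$ with $F$ finite. The difficulty is that a priori we only have a presentation; we must first \emph{find} such a short exact sequence algorithmically, and then extract the cocycle data $(F, d, C, c)$ from it.

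First I would run two semi-decision procedures in parallel, using that $\Gamma$ is residually finite (Proposition~\ref{prop:vAbbasics}) and hence has solvable word problem once we know it is virtually Abelian. On one track, enumerate all finite quotients of $\Gamma$ (by enumerating all homomorphisms to all finite groups, as in the proof of Proposition~\ref{prop:finepilist}); for each surjection $\varphi \colon \Gamma \twoheadrightarrow F$ onto a finite group $F$, use Proposition~\ref{prop:finindex} to compute a finite symmetric presentation of $K := \ker\varphi$ (with generators as words in $S$), then compute $K_{\ab}$ via Smith normal form and check whether $K_{\ab}$ is torsion-free, i.e. $K_{\ab} \cong \Z^d$, \emph{and} whether the given presentation of $K$ forces $K$ itself to be Abelian — equivalently, whether every commutator $[t,t']$ of generators of $K$ lies in $\langle R_K\rangle^{\triangleleft}$; this last check is a word-problem query in $K$, which we can attempt to confirm by enumerating consequences of the relators. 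On the other track, enumerate consequences of the relators $R$ to look for proofs that $\Gamma$ is \emph{not} virtually Abelian of the shape we seek — but since $\Gamma$ \emph{is} virtually Abelian by hypothesis, the first track is guaranteed to terminate: by Proposition~\ref{prop:vAbbasics} there genuinely exists a finite-index characteristic free Abelian normal subgroup $A \cong \Z^d$, its quotient $F = \Gamma/A$ is finite, and when our enumeration reaches this $F$ (or any finite quotient with Abelian torsion-free kernel) all the checks succeed. So the procedure halts and outputs a concrete surjection $\varphi \colon \Gamma \twoheadrightarrow F$ together with an explicit isomorphism $K_{\ab}\cong \Z^d$; since $K$ is Abelian this is an isomorphism $K \cong \Z^d$, and $d$ and (the multiplication table of) $F$ are now in hand.

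Next I would extract the action and cocycle. Having $\sigma \colon F \to \Free(S)$ a set-theoretic section as produced inside the proof of Proposition~\ref{prop:finindex} (with $\sigma(e) = \varepsilon$), define $C \colon F \to \Aut(\Z^d)$ by letting $C(g)$ be conjugation by $\sigma(g)$ on $A = K \cong \Z^d$: concretely, for each generator $a$ of $A$ (a word in $S$) and each $g \in F$, the element $\sigma(g)\, a\, \sigma(g)^{-1}$ again represents an element of $K$, which we rewrite as an integer vector in the chosen basis of $K_{\ab}$ using the Reidemeister rewriting already available; this yields the matrix $C(g) \in \Aut(\Z^d)$, and one checks $C(gh) = C(g)C(h)$ on the level of $\Z^d$ because $A$ is Abelian (so inner automorphisms by elements of $A$ act trivially). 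Likewise define $c \in \Chn^2(F;\Z^d)$ by $c(g,h) := \sigma(g)\,\sigma(h)\,\sigma(gh)^{-1}$, which lies in $K = A$ since $\pi$ kills it, and is again computed as an integer vector by rewriting; the cocycle identity for $c$ (with respect to $C$) is automatic because $c$ is, by construction, the obstruction $2$-cocycle of the extension $1 \to A \to \Gamma \to F \to 1$ with the section $\sigma$. Finally, standard facts on extensions give that the extension group built from $(\Z^d, F, C, c)$ is isomorphic to $\Gamma$; one can even verify this explicitly by checking that the evident map sending generators $S$ to their images agrees with both presentations, though this is not needed since the isomorphism is the classical one.

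The main obstacle is the termination of the search in the first step: we must be sure that \emph{some} finite quotient we enumerate has torsion-free Abelian kernel, and that we can \emph{recognise} this when it happens. Proposition~\ref{prop:vAbbasics} guarantees existence; recognisability follows because, once $\varphi$ is fixed, the kernel $K$ has an explicit finite presentation (Proposition~\ref{prop:finindex}), from which freeness-and-commutativity of $K$ is a property that, \emph{if true}, is confirmed in finite time by enumerating relator consequences to verify that all generator-commutators $[t_i,t_j]$ are trivial in $K$ and that $K_{\ab}$ is torsion-free (the latter decided outright by Smith normal form). Since for the characteristic subgroup $A$ of Proposition~\ref{prop:vAbbasics} these statements \emph{are} true, the confirmation is reached, the parallel search terminates, and the remaining data $(C,c)$ is then a routine rewriting computation as above.
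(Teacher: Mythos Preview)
Your proposal is correct and follows essentially the same strategy as the paper: enumerate finite quotients~$\varphi\colon\Gamma\twoheadrightarrow F$, compute a presentation of~$\ker\varphi$ via Reidemeister--Schreier, test whether this kernel is free Abelian, and once such a~$\varphi$ is found read off the conjugation action~$C$ and the extension cocycle~$c$ from the section~$\sigma$ exactly as you describe.

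The one point where the paper is slightly cleaner is the commutativity test for~$K$. You frame this as a one-sided confirmation (enumerating consequences of the relators until each~$[t_i,t_j]$ is seen to be trivial), which forces you to dovetail over all candidate~$\varphi$'s and to introduce a vestigial ``second track'' that never fires. The paper instead invokes the uniform \emph{decidability} of the word problem for residually finite groups (which you yourself cite via Proposition~\ref{prop:vAbbasics}): since~$\Gamma$ is residually finite, one can outright \emph{decide} whether each~$[t_i,t_j]$ is trivial in~$\Gamma$, so the test for a fixed~$\varphi$ always terminates with a definite yes/no, and one simply discards~$\varphi$ and moves on if the kernel is not free Abelian. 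This removes the need for any parallelism. Your argument is not wrong, just slightly more convoluted than necessary; the extraction of~$(d,F,C,c)$ is identical to the paper's.
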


Before giving the proof, we briefly review the cocycle notation: Let
$F$ be a group, let $A$ be an Abelian group, and let $C \colon F
\longrightarrow \Aut(A)$ be an $F$-action on~$A$. Then $\Chn_2(G)$,
the bar resolution of~$G$ in degree~$2$ is the free $\Z F$-module,
freely generated by the pairs~$([g_1 | g_2])_{g_1, g_2 \in F}$.  Then
a \emph{cocycle}~$c \in \Chn^2(F;A)$ is a $\Z F$-linear map~$\Chn_2(G)
\longrightarrow A$ that satisfies the cocycle condition
\[ 0 %= (\delta c)\bigl( [g_1 | g_2 | g_3] \bigr)
= C(g_1) \bigl(c [g_2 | g_3] \bigr)
- c [g_1 \cdot g_2 | g_3]
+ c [g_1 | g_2 \cdot g_3]
- c [g_1 | g_2]
\]
for all~$g_1, g_2, g_3 \in F$. 

Now let us recall the following explicit description of the extension
group~$\Lambda$ of~$\Z^d$ by~$F$ corresponding
to~$c$~\cite[Chapter~IV.3]{brown}: As underlying set, we take the
Cartesian product~$A \times F$ and as multiplication, we use
\begin{align*}
    (A \times F) \times (A \times F)
    & \longrightarrow (A \times F)
    \\
    \bigl((x,y), (x',y')\bigr)
    & \longmapsto
    \bigl( x + C(y)(x') + c(1 \cdot [y|y']), y \cdot y'\bigr);
\end{align*}
the neutral element of~$\Lambda$ is~$(e',e_F)$, where~$e' = - c(1
\cdot [e_F|e_F])$.  Then $\Lambda$ fits into the extension
  \[ \xymatrix{%
    1 \ar[r]
  & A \ar[r]^-{i}
  & \Lambda \ar[r]^-{\pi}
  & F \ar[r]
  & 1}
  \]
where
\begin{align*}
    i \colon A & \longrightarrow A \times F
    \\
    x & \longmapsto (x + e', e_F)
    \\
    \pi \colon \Lambda & \longrightarrow F
    \\
    (x,y) & \longrightarrow y.
\end{align*}
A set-theoretic section of~$\pi$ is, for instance,
\begin{align*}
    \sigma \colon F & \longrightarrow \Lambda \\
    y & \longmapsto (0,y).
\end{align*}

\begin{proof}[Proof of Proposition~\ref{prop:vAbconstructive}]
  Let $\Gamma := \bigl| \genrel SR \bigr|$.  We enumerate all
  (multiplication tables of isomorphism types of) finite groups (e.g.,
  as subgroups of finite permutation groups). For every finite
  group~$F$, we then perform the following steps:
  \begin{itemize}
  \item We compute the (finite) set of all epimorphisms~$\Gamma
    \longrightarrow F$ (using Proposition~\ref{prop:finepilist}).
  \item
    For each epimorphism~$f \colon \Gamma \longrightarrow F$, we
    compute a finite presentation~$\genrel TQ$ of~$\ker f$
    (Proposition~\ref{prop:finindex}), where the elements of~$T$ are
    specified as words in the given generating set~$S$ of~$\Gamma$.
  \item
    We then check whether all elements in~$T$ commute with each other.
    This is possible for the following reason: As finitely generated
    virtually Abelian group, $\Gamma$ is residually finite and
    therefore the uniform solution of the word problem for residually
    finite groups~ \cite[Theorem~5.2]{miller} provides us with an
    explicit algorithm to solve the word problem for~$\Gamma$ in the
    presentation~$\genrel SR$.
  \item
    If not all elements in~$T$ commute with each other, then $\ker f$
    is not Abelian and we discard~$f$ and proceed with the next
    epimorphism/finite group.
  \item
    If all elements in~$T$ commute with each other, then $\ker f$ is
    Abelian. From the presentation~$\genrel TQ$, we can compute (via
    the Smith normal form algorithm) whether $\ker f$ is free Abelian
    or not.
    \begin{itemize}
    \item If $\ker f$ is not free Abelian, then we discard~$f$ and
      proceed with the next epimorphism/finite group.
    \item If $\ker f$ is free Abelian, then we proceed as follows:
      \begin{itemize}
      \item
        We set~$A :=\ker f$ and we determine a basis~$B$ of~$A$ (via
        the standard algorithm). This also provides us with a
        transformation that allows to rewrite elements in~$T$ in this
        basis.
      \item 
        We then compute the $F$-conjugation action~$F \colon \Aut(A)$
        with respect to this basis: We search for $f$-lifts of each
        element in~$F$ and then compute for each of these lifts~$x$
        and each member~$b$ of~$B$ the conjugation~$x \cdot c \cdot
        x^{-1}$ in~$\Gamma$. The result will first be a word in~$S
        \cup S^{-1}$. We then rewrite this word in terms of~$T$ (via
        the Reidemeister rewriting process; proof of
        Proposition~\ref{prop:finindex}), and then in terms of~$B$.
      \item 
        Finally, we compute the cocycle~$c \in \Chn^2(F;A)$ of the
        extension
        \[ \xymatrix{%
          1 \ar[r]
          & A = \ker f \ar[r]
          & \Gamma \ar[r]^f
          & F \ar[r]
          & 1
        }
        \]
        through the well-known explicit formula~\cite[Chapter~IV.3]{brown}.
      \end{itemize}
    \end{itemize}
   \end{itemize}
  Because $\bigl|\genrel SR\bigr|$ is known to be virtually Abelian,
  Proposition~\ref{prop:vAbbasics} guarantees that this algorithm
  terminates.
\end{proof}

%%%%%%%%%%%%%%%%%
\subsection{Product targets}

\begin{thm}\label{thm:prod}
  There exists an algorithm that, given the input
  \begin{itemize}
  \item a finite presentation~$\genrel SR$,
  \item a finite group~$F$ \textup{(}as set of elements and its
    multiplication table\textup{)},
  \item a~$d\in\N$,
  \end{itemize}
  decides whether there exists an epimorphism~$\bigl|\genrel SR\bigr|
  \longrightarrow \Z^d \times F$ or not.
\end{thm}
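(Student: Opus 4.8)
The plan is to assemble the pieces from Section~\ref{sec:prep} (the reduction to abelianisations in the product case) and Section~\ref{sec:linalg} (the solvable linear-algebra problem). First, using Proposition~\ref{prop:symmgenrel}, I would replace the given finite presentation~$\genrel SR$ by an isomorphic symmetric finite presentation; since an isomorphism of the domain does not affect the existence of an epimorphism onto the fixed target~$\Z^d \times F$, this is harmless. Now we are in the situation of Setup~\ref{set:ab} with $A := \Z^d$, and Proposition~\ref{prop:KEprod} tells us that an epimorphism~$\Gamma := \bigl|\genrel SR\bigr| \longrightarrow \Z^d \times F$ exists if and only if there is an epimorphism~$\varphi \colon \Gamma \longrightarrow F$ with~$\Eab(\varphi) \neq \emptyset$. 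In particular, if there is no epimorphism~$\Gamma \longrightarrow F$ at all, the algorithm outputs \emph{no} (which is correct, since any epimorphism onto~$\Z^d \times F$ would project onto one), and otherwise it suffices to loop over the finitely many epimorphisms~$\Gamma \longrightarrow F$.

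Next, I would compute the finite set of all epimorphisms~$\varphi \colon \Gamma \longrightarrow F$ via Proposition~\ref{prop:finepilist}. For each such~$\varphi$, Proposition~\ref{prop:finindex} yields a (symmetric) finite presentation~$\genrel TQ$ of~$K := \ker \varphi$, with the generators in~$T$ given explicitly as words in~$S$. Abelianising the presentations~$\genrel SR$ and~$\genrel TQ$ directly produces finite presentations of the $\Z$-modules~$\Gamma_{\ab}$ and~$K_{\ab}$, and the matrix of the induced map~$\kappa \colon K_{\ab} \longrightarrow \Gamma_{\ab}$ is read off from the exponent-sum vectors of the chosen generators of~$K$, viewed as elements of~$\Gamma_{\ab}$. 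By Definition~\ref{def:notationab}, $\Eab(\varphi) \neq \emptyset$ exactly when there is an~$f \in \Hom(\Gamma_{\ab}, \Z^d)$ with~$(f \circ \kappa)(K_{\ab}) = \Z^d$, i.e.\ when the homomorphism~$\psi := f \circ \kappa \colon K_{\ab} \longrightarrow \Z^d$ is an epimorphism that extends over~$\kappa$. This is precisely the decision problem solved by Proposition~\ref{prop:cgprod} (with~$M(A_1) := K_{\ab}$, $M(A_2) := \Gamma_{\ab}$, the map~$\kappa$, and the given~$d$), so I would run that algorithm for each~$\varphi$ and output \emph{yes} if and only if it returns \emph{yes} for at least one~$\varphi$.

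The substantive content has by this point all been outsourced: Proposition~\ref{prop:finindex} provides the presentation of~$\ker\varphi$ (the technically heaviest prerequisite, relying on the Reidemeister rewriting process), and Proposition~\ref{prop:cgprod} solves the resulting symmetric homogeneous column-generation problem. Thus the only thing requiring genuine care in this theorem is bookkeeping: checking that the matrix of~$\kappa$ is computed correctly from the words in~$T$ and that the condition ``$\Eab(\varphi)\neq\emptyset$'' matches verbatim the input expected by Proposition~\ref{prop:cgprod}. Correctness of the overall algorithm then follows by combining Proposition~\ref{prop:KEprod} with this case-by-case check, and termination is clear since~$F$ is finite and each subroutine terminates.
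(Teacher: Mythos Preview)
Your proposal is correct and follows essentially the same route as the paper: symmetrise the presentation, enumerate epimorphisms~$\varphi\colon\Gamma\to F$ via Proposition~\ref{prop:finepilist}, compute a presentation of~$\ker\varphi$ via Proposition~\ref{prop:finindex}, abelianise both groups to obtain~$\kappa$ as a matrix, and feed each instance into Proposition~\ref{prop:cgprod}, with correctness coming from Proposition~\ref{prop:KEprod}. Your write-up is in fact slightly more explicit than the paper's in spelling out how the matrix for~$\kappa$ is obtained (exponent-sum vectors) and in verifying that the condition~$\Eab(\varphi)\neq\emptyset$ matches the input format of Proposition~\ref{prop:cgprod}.
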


\begin{proof}
  We write~$\Gamma := \bigl| \genrel SR\bigr|$. In view of
  Proposition~\ref{prop:KEprod}, it suffices to check for each
  epimorphism~$\varphi \colon \Gamma \longrightarrow F$,
  whether~$\Eab(\varphi) = \emptyset$ or not.

  \begin{itemize}
  \item By Proposition~\ref{prop:symmgenrel}, we may assume without
    loss of generality that $\genrel SR$ is a symmetric finite
    presentation.
  \item
    We compute the set~$E$ of all epimorphisms~$\Gamma \longrightarrow
    F$ (as a subset of~$\map(S,F)$;
    Proposition~\ref{prop:finepilist}).
  \item We determine a finite presentation of~$\Gamma_{\ab}$, namely
    \[ \genrel S {R \cup \{ sts^{-1}t^{-1}\mid s,t \in S \}}.
    \]
    For each~$\varphi \in E$, we determine finite presentations of~$K
    := \ker \varphi$ (Proposition~\ref{prop:finindex}) and then also
    of~$K_{\ab}$.  This allows us to find integral matrices~$A_1$ and
    $A_2$ with canonical isomorphisms~$K_{\ab} \cong_\Z M(A_1)$ and
    $\Gamma_{\ab} \cong_\Z M(A_2)$ as well as a matrix description of
    the corresponding homomorphism~$M(A_1) \longrightarrow M(A_2)$
    induced by the inclusion~$K \longrightarrow \Gamma$.
  \item
    We then compute (using these finite presentations in matrix form
    and Proposition~\ref{prop:cgprod}) the subset
    \[ \widetilde E := \{ \varphi \in E \bigm| \Eab(\varphi) \neq \emptyset \}
    \]
  \item
    \begin{itemize}
    \item
      If $\widetilde E \neq \emptyset$, then the answer is \emph{yes},
      there exists an epimorphism~$\Gamma \longrightarrow F$.
      
    \item
      If $\widetilde E = \emptyset$, the answer is \emph{no}.
    \end{itemize}
  \end{itemize}
  Correctness of this algorithm is guaranteed by
  Proposition~\ref{prop:KEprod}.
\end{proof}

%%%%%%%%%%%%%%%%%%
\subsection{Virtually cyclic targets}

\begin{thm}\label{thm:vZ}
  There exists an algorithm that, given the input
  \begin{itemize}
  \item a finite presentation~$\genrel SR$, 
  \item a finite presentation~$\genrel {S'}{R'}$ of a group~$\Lambda$
    that is virtually~$\Z$,
  \end{itemize}
  decides whether there exists an epimorphism~$\bigl|\genrel SR\bigr|
  \longrightarrow \Lambda$ or not.
\end{thm}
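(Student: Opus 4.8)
The plan is to reduce Theorem~\ref{thm:vZ} to the case of a fixed virtually cyclic target in the standard constructive form, and then to the one-dimensional column-generation problem solved in Proposition~\ref{prop:cgvZ}. First, given the finite presentation $\genrel{S'}{R'}$ of the virtually-$\Z$ group $\Lambda$, I would apply Proposition~\ref{prop:vAbconstructive} to compute a finite group $F$, the integer $d$ (which here must satisfy $d \in \{0,1\}$, and if $d = 0$ then $\Lambda$ is finite and we fall back on Proposition~\ref{prop:finepilist}), an action $C \colon F \longrightarrow \Aut(\Z)$, and a cocycle $c \in \Chn^2(F;\Z)$ exhibiting $\Lambda$ as an extension of $A \cong \Z$ by $F$. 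This puts us exactly in the situation of Setup~\ref{set:epic}, with the explicit section $\sigma \colon F \longrightarrow \Lambda$ described just before the proof of Proposition~\ref{prop:vAbconstructive}. Separately, by Proposition~\ref{prop:symmgenrel} I may assume $\genrel SR$ is symmetric, and write $\Gamma := \bigl|\genrel SR\bigr|$.

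Next, by Corollary~\ref{cor:concreteepi}, an epimorphism $\Gamma \longrightarrow \Lambda$ exists if and only if there is an epimorphism $\varphi \colon \Gamma \longrightarrow F$ with $\EE(\varphi) \neq \emptyset$. Using Proposition~\ref{prop:finepilist} I compute the finite set of all epimorphisms $\varphi \colon \Gamma \longrightarrow F$, and for each such $\varphi$ I use Proposition~\ref{prop:finindex} to compute a symmetric finite presentation of $K := \ker\varphi$, which in particular yields a finite set $T \subset \Free(S)$ representing a generating set of $K$. Then, following Proposition~\ref{prop:Kc} and the recipe spelled out in Remark~\ref{rem:computeK} (using the matrices for the conjugations $C_{\sigma(x)}$, $x \in F$, which here are just $\pm 1$ since $\Aut(\Z) = \{\pm 1\}$, and the cocycle values $c$), I compute whether $\KK(\varphi) \subset \map(T,\Z) \cong \Z^{|T|}$ is empty, and if not, a finite set $X \subset \Z^{|T|}$ and an offset $b \in \Z^{|T|}$ with $\KK(\varphi) = \Span_\Z X + b$. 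Deciding whether $\EE(\varphi) \neq \emptyset$ is then precisely asking whether some element of $\Span_\Z X + b$ has entries generating $\Z = A$, which is exactly the one-dimensional column-generation problem: I apply Proposition~\ref{prop:cgvZ}. If the answer is \emph{yes} for some $\varphi$, output \emph{yes}; if it is \emph{no} for every $\varphi$ (the list being finite), output \emph{no}. Correctness is guaranteed by Corollary~\ref{cor:concreteepi}.

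The main subtlety — and the only place where anything beyond bookkeeping happens — is that $A \cong \Z$ genuinely forces $d = 1$, so the conjugation maps $C_{\lambda_j}$ in equation~\eqref{eq:lineq} are automorphisms of $\Z$, i.e.\ multiplication by $\pm 1$; consequently the affine subspace $\LL(\varphi,r)$ really is cut out by an ordinary inhomogeneous $\Z$-linear equation, and Remark~\ref{rem:computeK} applies verbatim. Unlike the product case (Setup~\ref{set:ab}, Proposition~\ref{prop:cgprod}), here $\KK(\varphi)$ is in general a genuine \emph{affine} subspace rather than a linear one, which is exactly why we need the full strength of Proposition~\ref{prop:cgvZ} (with its nontrivial Lemma~\ref{lem:d=1gcd}) and cannot get away with the easier homogeneous argument. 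Everything else — enumerating finite groups, computing kernels, Smith normal forms, Reidemeister rewriting — is standard and has already been packaged into the propositions of Section~\ref{sec:solv}.
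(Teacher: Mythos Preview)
Your proposal is correct and follows essentially the same route as the paper's proof: symmetrise the source presentation, convert the target to constructive form via Proposition~\ref{prop:vAbconstructive}, enumerate epimorphisms to~$F$ via Proposition~\ref{prop:finepilist}, compute~$\KK(\varphi)$ as an affine subspace via Remark~\ref{rem:computeK}, and then decide~$\EE(\varphi) \neq \emptyset$ via Proposition~\ref{prop:cgvZ}, with correctness guaranteed by Corollary~\ref{cor:concreteepi}. You are slightly more explicit than the paper in invoking Proposition~\ref{prop:finindex} to produce~$T$ and in handling the degenerate case~$d=0$, but these are cosmetic differences.
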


\begin{proof}[Proof of Theorem~\ref{thm:vZ}]  
  We write $\Gamma := \bigl| \genrel SR \bigr|$.  In view of
  Corollary~\ref{cor:concreteepi}, it suffices to check for each
  epimorphism~$\varphi \colon \Gamma \longrightarrow F$,
  whether~$\EE(\varphi) = \emptyset$ or not.
  \begin{itemize}
  \item By Proposition~\ref{prop:symmgenrel}, we may assume without loss
    of generality that $\genrel SR$ is a symmetric finite presentation.
  \item
    Using the algorithm of Proposition~\ref{prop:vAbconstructive}, we
    transform the presentation~$\genrel{S'}{R'}$ into 
    \begin{itemize}
    \item a finite group~$F$ (as set of elements and its
      multiplication table),
    \item a group homomorphism~$C \colon F \longrightarrow \Aut(\Z)$,
    \item a cocycle~$c \in \Chn^2(F; \Z)$ (with respect to the action~$C$),
    \end{itemize}
    such that $\Lambda$ is isomorphic to the extension of~$\Z$ by~$F$
    corresponding to the cocycle~$c$.
  \item We compute the set~$E$ of all epimorphism~$\Gamma
    \longrightarrow F$ (as a subset of~$\map(S,F)$;
    Proposition~\ref{prop:finepilist}).
  \item We then compute (using the algorithm outlined in 
    Remark~\ref{rem:computeK}, and Proposition~\ref{prop:cgvZ}) the subset
    \[ \widetilde E := \bigl\{ \varphi \in E \bigm| \EE(\varphi) \neq \emptyset \bigr\}.
    \]
    \begin{itemize}
    \item If $\widetilde E \neq \emptyset$, then the answer is
      \emph{yes}, there exists an epimorphism~$\Gamma \longrightarrow
      F$.
    \item If $\widetilde E = \emptyset$, the answer is \emph{no}.
    \end{itemize}
  \end{itemize}
  Correctness of this algorithm is guaranteed by
  Corollary~\ref{cor:concreteepi}.
\end{proof}

%%%%%%%%%%%%%%%%%%%%%%%%%%%%%%%%%%%%%%%%%%%%%%%%%%%%%%%%%%%%%%%%%%

\medskip
\vfill

\noindent
\emph{Stefan Friedl}\\
\emph{Clara L\"oh}\\[.5em]
  {\small
  \begin{tabular}{@{\qquad}l}
    Fakult\"at f\"ur Mathematik,
    Universit\"at Regensburg,
    93040 Regensburg\\
    %Germany\\
    \textsf{stefan.friedl@mathematik.uni-r.de}, 
    \textsf{http://www.mathematik.uni-r.de/friedl}\\
    \textsf{clara.loeh@mathematik.uni-r.de}, 
    \textsf{http://www.mathematik.uni-r.de/loeh}
  \end{tabular}}


\begin{thebibliography}{100}

\bibitem{baumslag}
  G.~Baumslag.
  Residually finite groups with the same finite images,
  \emph{Compos. Math.} 29, 249-252, 1974. 

\bibitem{brown}
  K.~S.~Brown.
  \emph{Cohomology of Groups}, volume~87 of \emph{Graduate Texts in
   Mathematics}, Springer, 1982.

\bibitem{cohen}
  H.~Cohen. \emph{A course in computational algebraic
  number theory}, volume 138 of \emph{Graduate Texts in Mathematics},
  Springer, 1993.
 
\bibitem{loeh}
  C.~L\"oh.
  \emph{Geometric group theory. An introduction}, 
   Universitext, Springer, 2017.
   
\bibitem{mks}
  W.~Magnus, A.~Karrass, D.~Solitar.
  \emph{Combinatorial Group Theory.
    Presentations of Groups in Terms of Generators and Relations}, 
  second revised edition,
  Dover, 1976.

\bibitem{miller}
  C.~F.~Miller III. \emph{Decision problems for
  groups-survey and reflections}, Algorithms and classification in
  combinatorial group theory (Berkeley, CA, 1989), 1--59,
  Math. Sci. Res. Inst. Publ., 23, Springer, 1992.

%\bibitem{newman}
%M.~Newman. \emph{Integral matrices},
%Pure and Applied Mathematics, 45. New York-London: Academic Press. XVII, 1972.

\bibitem{remeslennikov}
   V.~N.~Remeslennikov.
   An algorithmic problem for nilpotent groups and rings,
   \emph{Sibirsk.\ Mat.\ Zh.}, 20(5), 1077--1081, 1979. 

\bibitem{ribes-zalesskii}
   L.~Ribes and P.~Zalesskii. \emph{Profinite
   groups}, Second edition, Ergebnisse der Mathematik und ihrer
   Grenzgebiete. 3. Folge. A Series of Modern Surveys in Mathematics
   40. Springer-Verlag, Berlin, 2010.


\end{thebibliography}
\end{document}